\numberwithin{equation}{section}
\newtheorem{theorem}{Theorem}[section]
\newtheorem{lemma}[theorem]{Lemma}
\theoremstyle{definition}
\newtheorem{proposition}[theorem]{Proposition}
\newtheorem{definition}{Definition}[section]
\theoremstyle{remark}
\newtheorem{remark}{Remark}
\newcommand{\R}{\mathbb{R}}
\date{\today}
\title[Optimal spectral inequality for higher-dimensional Landau operator]{Optimal spectral inequality for the higher-dimensional Landau operator}
\author{
Sedef Özcan\,\orcidlink{0009-0005-4636-9406}
\and
Matthias T\"aufer\,\orcidlink{0000-0001-8473-2310}
}
\address{Sedef Özcan, Dokuz Eylül University, Faculty of Science, Department of Mathematics, Izmir, Turkey.}
\address{Matthias Täufer, CERAMATHS, Université Polytechnique Hauts-de-France,
Le Mont Houy 59313 Valenciennes Cedex 09,
France;
\text{Matthias.Taufer@uphf.fr}.}
\begin{document}

\begin{abstract}
We prove optimal spectral inequalities for Landau operators in full space and in arbitrary dimension.
Spectral inequalities are lower bounds on the  $L^2$-mass of functions in spectral subspaces of finite energy when integrated over a sampling set $S \subset \R^d$.
Landau operators are Schrödinger operators associated with a constant magnetic field of the form $(- \nabla + A(x))^2$ where $A$ is a -- in case of non-vanishing magnetic field -- unbounded vector potential.
Our strategy relies on so-called magnetic Bernstein estimates and analyticity, adapting an approach used by Kovrijkine in the context of the Logvinenko-Sereda theorem.
We generalize results previously only known in dimension $d = 2$.
The main difficulty in dimension $d \geq 3$ are the magnetic Bernstein inequalities which, in comparison to the two-dimensional case, lead to additional complications and require more delicate estimates.
Our results have immediate consequences for control theory, spectral theory and mathematical physics which we comment on.

\end{abstract}

\keywords{Landau operator, Spectral inequality, Quantitative unique continuation, Magnetic derivatives, Thick set}
 \subjclass[2010]{Primary: 35Pxx. Secondary: 35A23, 93B05, 82B44, 35R03}

	\maketitle

\section{Introduction}

Spectral inequalities of Lebeau-Robbiano type are inequalities of the form
\[
    \lVert \phi \rVert_{L^2(\R^d)}^2
    \leq
    C 
    \exp(c \sqrt{E})
    \lVert \phi
    \rVert_{L^2(S)}^2
    \quad
    \text{for all}
    \
    f \in \operatorname{Ran}
    \mathbf{1}_{(-\infty,E]}(A)
\]
where $A$ is a self-adjoint elliptic differential operator in $L^2(\R^d)$, $S \subset \R^d$ a suitable measurable sampling set and $ \operatorname{Ran}
    \mathbf{1}_{(-\infty,E]}(A)$ is the spectral subspace of energies up to $E$ corresponding to $A$.
They are also known as quantitative unique continuation principles or uncertainty relations and they play an important role in control theory of parabolic equations, spectral theory, and mathematical physics.

In control theory, a now classical framework -- initiated by Lebeau and Robbiano in~\cite{LebeauR-95} and later developed
systematically, e.g. in~\cite{Miller-04,TenenbaumT-11,LebeauL-12} -- shows that spectral inequalities imply observability estimates and hence null-controllability of the associated heat equation with interior control in $S$.
Furthermore, the constants $C$ and $c$ directly and explicitly affect the associated \emph{control cost}.
Indeed, in recent years spectral inequalities have been used to prove null-controllability of various parabolic problems in full space: The (classic) heat equation~\cite{EgidiV-18, WangWZZ-19}, certain hypoelliptic quadratic equations~\cite{BeauchardPS-17, BeauchardJPS-21}, the heat equation with Laplace-Beltrami operator on a Riemannian manifold with analytic potential which asymptotically looks like $\R^d$~\cite{LebeauM-19}, the heat equation with bounded potential~\cite{NakicTTV-20}, 
the heat equation with power-growth potentials~\cite{DickeSV-24},
Shubin operators~\cite{AlphonseS-24},
the two-dimensional Landau operator~\cite{PfeifferT-25},
and quantum graphs~\cite{EgidiMS-24}.
In spectral theory, they have been used to prove lower bounds on the effect of perturbations on spectra~\cite{NakicSTTV-20} and for semiclassical eigenvalue estimates~\cite{FrankLS-25}.
In mathematical physics, spectral inequalities have been a cornerstone in understanding quantum systems, in particular in condensed matter physics in the context of Anderson localization, see~\cite{TaeuferV-15, SeelmannT-20,CapoferriT-25} and the references therein.

There has been a particular focus in the interplay between spectral inequalities and the geometry of the sampling set $S$.
So-called \emph{thick sets} have emerged as the natural and optimal class for observability of the heat equation
\cite{EgidiV-18, WangWZZ-19} and indeed this is the case in which spectral inequalities from thick sets are best understood since spectral subspaces of the Laplacian can be formulated in terms of the support of the Fourier transform whence a spectral inequality is equivalent to from the classic Kovrijkine-Logvinenko-Sereda theorem~\cite{LogvinenkoS-74, Kovrijkine-00, Kovrijkine-01}.
Unfortunately, this approach is quite rigid and not well-suited to generalizations such as adding a generic electric potential or a magnetic field.
Therefore, it has been customary to instead work with the more restrictive geometric notion of \emph{equidistributed sets}, that are unions of open balls which are spaced in such a way that every elementary cell contains at least one such ball~\cite{NakicTTV-18,NakicSTTV-20,DuanWC-20,DickeSV-24a,DuanYZ-26}.
Corresponding spectral inequalities typically rely on Carleman estimates which require a more restrictive geometry but allow to treat bounded potentials as a perturbation of the free Laplacian.
Following a similar Carleman-type approach there also exist some works on spectral inequalities with \emph{bounded} magnetic potentials, see e.g.~\cite{TautenhahnT-18}.
However, in $\R^d$, this amounts to a magnetic field which, up to small fluctuations, is macroscopically vanishing and this excludes in particular the physically most relevant case of a homogenous constant magnetic field.
Constant magnetic fields in full space will require an inherently \emph{unbounded} magnetic potential and lower order terms can no longer be treated as perturbations.
Indeed, let \(x=(x_{1},\dots,x_{d})^{\top}\in\R^{d}\) and 
\(B\in\R^{d\times d}\) an antisymmetric matrix describing the \emph{magnetic field}.
The associated \emph{magnetic potential} and covariant gradient are
\[
A(x)=\tfrac12 Bx,
\qquad
\tilde\nabla := -i\nabla - A(x),
\]
and the magnetic Laplacian is
\[
H_B := \tilde\nabla^{2}
      = \left(-i\nabla - A(x)\right)^{2}.
\]
This operator describes a charged quantum particle subject to a homogeneous and constant magnetic field.
There is some gauge invariance and different choices of $A$ will lead to the same magnetic field $B$, see also Section~\ref{sec:preliminaries}, but in any case, as soon as the magnetic field $B$ is non-vanishing, every corresponding magnetic potential $A$ must be unbounded and linearly growing in $x$.

As long as one works on bounded domains $\Omega \subset \R^d$, the magnetic potential associated with a constant magnetic field will remain bounded which has been used to infer controllability using Carleman estimates (see, e.g., \cite{HuangKSM-19}) but such estimates inherently depend on the diameter of the domain and, for this reason, cannot be optimal and not generalizable to the full-space case.

For the two-dimensional Schrödinger operator with constant magnetic field, an optimal spectral inequality has recently been obtained in~\cite{PfeifferT-25}.
To the best of our knowledge, this work is the first to treat the magnetic Laplacian in its natural unbounded setting on $\R^d$, establishing a spectral inequality that is independent of any domain truncation and fully captures the interplay between the unbounded magnetic potential and the geometry of the thick set.
The strategy therein exploits analyticity of eigenfunctions through the
Bernstein–type (or “analytic continuation’’) method of
\cite{Kovrijkine-00}, see also~\cite{EgidiS-21}.
The key were so-called magnetic Bernstein inequalities which represented higher-order magnetic derivatives as powers of the two-dimensional Landau operator.
The fact that suitable sums of magnetic derivatives could be recombined into powers of the two-dimensional Landau operator seemed to be a somewhat lucky circumstance related to dimension $d = 2$ which did not work beyond see~\cite[Remark 6]{PfeifferT-25}.

In this paper we surmount this problem and prove sharp spectral inequalities for \(H_B\) on
\(\R^{d}\).
For every energy level \(E>0\) and every measurable set
\(S\subset\R^{d}\) that is \emph{thick} in the sense of
\cite{EgidiV-18,EgidiV-20}, we prove
\[
\|f\|_{L^{2}(\R^{d})}^{2}
 \le C(E,B,S)\,\|f\|_{L^{2}(S)}^{2},
 \qquad
 f \in \operatorname{Ran}\mathbf{1}_{(-\infty,E]}(H_B),
\]
where the observability constant \(C(E,B,S)\) is explicit and optimal in its
dependence on the energy \(E\), the geometry of \(S\), and the magnetic field
\(B\).  
The estimate recovers the known two-dimensional Landau case and provides the
first higher-dimensional result with fully explicit dependence on all
parameters.
Our key contribution is a finer analysis of the algebra generated by magnetic derivatives and estimates in which we complete sums of magnetic derivatives with non-negative terms to suitable powers of magnetic Schrödinger operators thus complementing the algebraic analysis-of-magnetic-derivatives approach in~\cite{PfeifferT-25} with analytic techniques see Section~\ref{sec:main-proof}.
Beyond the already mentioned applications to controllability of parabolic problems, spectral theory and Anderson localization, we emphasize that we now treat the physically relevant three- and four-dimensional cases~\cite{LiW-13}.

The paper is organised as follows:
Section~\ref{sec:preliminaries} recalls the spectral properties of \(H_{B}\);
Section~\ref{sec:analyticity} proves the analyticity estimates;
Section~\ref{sec:main-proof} shows the spectral inequality and its sharp
dependence on the parameters; and discusses the optimality of the exponents and
possible extensions.

\section{Preliminaries}
\label{sec:preliminaries}

\subsection{Notation}

Throughout this work we assume that the dimension is \(d \ge 3\).
Indeed, the case $d = 2$ has recently been treated in~\cite{PfeifferT-25} whereas in dimension \(d = 1\) there is no nontrivial constant magnetic field, and the corresponding non-magnetic statement reduces to the classical Logvinenko–Sereda–Kovrijkine theorem~\cite{Kovrijkine-00, Kovrijkine-01}.

For a vector \(x = (x_1, x_2, \dots, x_d) \in \R^d\), we denote by
\[
|x| := \left( \sum_{i=1}^d x_i^2 \right)^{1/2}
\quad \text{and} \quad
|x|_1 := \sum_{i=1}^d |x_i|
\]
its Euclidean norm and its \(\ell^1\)-norm, respectively.
For a matrix \(M = (M_{kl}) \in \R^{d \times d}\), we denote by
\[
\|M\|_{\mathrm{f}} := \left( \sum_{j,l} |M_{jl}|^2 \right)^{1/2},
\]
its Frobenius norm, and by
\[
\|M\|_1 := \max_{1 \le l \le d} \sum_{k=1}^d |M_{kl}|,
\]
the matrix norm, induced by the \(\ell^1\)-norm.
 The expression \( \mathrm{Vol}(S) \) refers to the \(d\)-dimensional Lebesgue measure of a measurable set \( S \subset \R^d \) and \( \mathbf{1}_S \) denotes the indicator function of a set \( S \). 
In particular, given a self-adjoint operator \( A \) and \( E \in \R \), we denote by \( \mathbf{1}_{(-\infty, E]}(A) \) the orthogonal projection onto the spectral subspace of \( A \) corresponding to energies up to \(E\).  We denote by \( \partial_i := \frac{d}{dx_i} \) the partial derivative with respect to the \(x_i\) coordinate.

The central geometric notion in this article are \emph{thick} or \emph{relatively dense} sets.
They originated in Fourier analysis in the context of the so-called Logvinenko-Sereda theorem~\cite{LogvinenkoS-74}, see also~\cite{Paneah-61, Kacnelson-73, Kovrijkine-00, Kovrijkine-01}.

	\begin{definition}
    Let $\ell=(\ell_{1},\cdots,\ell_{d})\in (0,\infty)^{d}$ and $\rho\in (0,1]$. 
    A measurable set $S \subset\R^{d}$ is \emph{$(\ell,\rho)$-thick} if, for every hyperrectangle $Q$ with side lengths $(\ell_{1},\cdots,\ell_{d})$ parallel to the axes, one has
\begin{equation*}
\operatorname{Vol}(S\cap Q)\geq \rho \operatorname{Vol}(Q).
\end{equation*}
We call $S$ \emph{thick} if it is $(\ell,\rho)$-thick for some $\ell \in (0,\infty)^d$ and $\rho > 0$.
\end{definition}

$(\ell,\rho)$-thick sets necessarily fill at least a \(\rho\)-fraction of the volume of every axis-aligned \(\ell\)-hyperrectangle in \(\R^{d}\).
They can be quite irregular and can even have non-empty interior as can be seen via constructions using positive measure Smith-Volterra Cantor sets.   

They have featured prominently in spectral inequalities in recent years~\cite{EgidiV-18, WangWZZ-19, NakicSTTV-20} and can be considered as optimal sets from which spectral inequalities can hold.
Indeed, for many classical differential operators such as Laplacians with a potential, spectral inequalities from thick sets, while possible true, are still out of reach and works usually resort to a more restrictive setting of so-called \emph{equidistributed} sets (unions of balls such that every elementary cell of $\mathbb{Z}^d$ contains exactly one such ball), see~\cite{NakicTTV-18, NakicSTTV-20}.

\subsection{The magnetic Schrödinger operator}

Dynamics in the absence of magnetic fields are governed by the negative Laplacian $-\Delta = \sum_{j = 1}^d \frac{\partial^2}{\partial xj}^2$.
In the presence of a magnetic field, this operator is replaced by a magnetic Schrödinger operator $(i \nabla + A(x))^2$ where $A \colon \R^d \to \R^d$ is a \emph{magnetic vector potential}.
Indeed, it is not the vector field $A$ that bears physical relevance but only the \emph{magnetic field $B$} given by its rotation $B = \operatorname{rot} A$ (as for an electrostatic potential where only its gradient, that is relative differences, affects dynamics). 
There is \emph{gauge invariance}: Every vector potential $A'$ with $\operatorname{rot} A' = 0$ yields the samy dynamics upon replacing $A$ by $A + A'$.
Actually, $B$ is a $2$-form, that is, it can be written as a function $B \colon \R^d \to \R^{\binom{d}{2}}$.
In particular, in dimension $d = 1$, there are no magnetic fields, in dimension $d = 2$, magnetic fields are determined by a value at each point $x \in \R^2$, in dimension $d = 3$ by a $3$-dimensional vector at each point $x \in \R^3$ and in higher dimensions by a $\R^{\binom{d}{2}}$-valued function.
    A lot can be said if the magnetic potential $A$ is \emph{bounded} and the magnetic Schrödinger operator can be considered as a perturbation of the Laplacian~\cite{TautenhahnT-18}.
 
    However, we are interested in a fundamentally different case, namely in \emph{constant magnetic fields}.
    These fields must necessarily arise from \emph{unbounded vector potentials}, so any attempt in dealing with corresponding magnetic Laplacians as a perturbation of the free Laplacian on unbounded domains will be futile.
    Indeed, another way to interpret a magnetic Schrödinger operator is that classic derivatives in the Laplacian are replaced by \emph{magnetic derivatives} which no longer commute but the commutators of which contain information on the underlying magnetic field.
    Using a particular choice of gauge namely the \emph{symmetric gauge}, this leads to the following definition.

\begin{definition}
    Let \( B \in \R^{d \times d} \) be a real antisymmetric matrix, representing a constant magnetic field, and define the associated magnetic vector potential by
\(
A(x) := \frac{1}{2} B x, \quad x \in \R^d.
\)
Then, the magnetic derivative in the \(k\)-th coordinate direction is defined by
\[
\tilde{\partial}_k := i \partial_k + A_k(x) = i \frac{\partial}{\partial x_k} + \frac{1}{2} \sum_{j=1}^d B_{kj} x_j, \quad \text{for } k = 1, \dots, d.
\]
The corresponding \emph{magnetic Laplace operator} or \emph{Landau operator} is 
\begin{equation}
    \label{eq:magnetic_Laplacian}
    H_B := \sum_{k=1}^d \tilde{\partial}_k^2 = (-i \nabla- A(x) )^2.
\end{equation}
\end{definition}
\noindent The operators $\tilde{\partial}_k$ satisfy the commutation relations
 $
 [\tilde{\partial}_k, \tilde{\partial}_l] = i B_{kl},
 $
and therefore generate a non-commutative algebra reflecting the underlying magnetic field.
    Equation~\ref{eq:magnetic_Laplacian} allows to interpret the magnetic Laplacian with constant magnetic field as a form of Laplacian, also referred to as~\emph{magnetic Laplacian}.
    We shall use this structure extensively in the remainder of the paper.
    Note that the algebra of non-commuting magnetic derivatives has a rich history and has also been studied due to its connection with the Heisenberg Lie algebra~\cite{Thangavelu-98}.
    We start by discussing the domain of the magnetic Laplacian which bears resemblance to the classic Sobolev spaces.

	\begin{definition}
Let \(m\in\mathbb N\).
For a multi–index \(\beta=(\beta_1,\dots,\beta_k)\) with \(1\le k\le m\),
set
\[
\tilde{\partial}^{\beta}:=\tilde{\partial}_{\beta_1}\cdots
\tilde{\partial}_{\beta_k}.
\]
The \emph{magnetic Sobolev space of order \(m\)} is
\[
W^{m,2}_{B}(\R^{d})=\{f\in L^{2}(\R^{d}) \mid \ \tilde{\partial}^{\beta}f \in L^{2}(\R^{d}) \ \text{ for all }\beta \in \{1,\cdots,d \}^{k} \text{ for all } k\leq m\}
\]
We write \(W^{m,2}_{B}\) when the domain is clear, and
\[
W^{\infty,2}_{B}(\R^{d})
   := \bigcap_{m\ge1} W^{m,2}_{B}(\R^{d}).
\]
\end{definition}
\noindent In simple terms, the magnetic Sobolev space \( W^{\infty,2}_B(\R^d) \) consists of all functions in \( L^2(\R^d) \) whose magnetic derivatives of all orders exist and remain square-integrable.  
We start by showing that the spectral subspace \(\operatorname{Ran} \mathbf{1}_{(-\infty, E]}(H_B)\) is contained in \( W^{\infty,2}_B(\R^d) \). For this purpose, recall the following auxiliary lemma from \cite[Lemma 2.1]{Sjoestrand-91}.
\begin{lemma}\label{sjöst}
	If $f \in L^{2}(\R^{d})$ and $H_{B}f \in W^{m,2}_{B}$, $m\in \mathbb{N}$, then $f\in W^{m+2,2}_{B}$.
\end{lemma}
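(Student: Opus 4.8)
The plan is to prove the statement for all integers $m \ge 0$ by induction on $m$, the base case $m = 0$ --- that is, $f \in L^2(\R^d)$ and $H_B f \in L^2(\R^d)$ imply $f \in W^{2,2}_B(\R^d)$ --- carrying the analytic content while the inductive step is reduced to it by a commutator computation. The one algebraic fact driving everything is the identity
\[
  [H_B, \tilde{\partial}_k] \;=\; 2i \sum_{j=1}^d B_{jk}\, \tilde{\partial}_j ,
\]
obtained by expanding $H_B = \sum_j \tilde{\partial}_j^2$ and applying $[\tilde{\partial}_j, \tilde{\partial}_k] = i B_{jk}$ twice; crucially its right-hand side is again \emph{first} order in the magnetic derivatives, so commuting $H_B$ past one magnetic derivative costs only one order of regularity. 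I would also record the elementary equivalence: for $u \in L^2(\R^d)$, one has $u \in W^{m+2,2}_B$ if and only if $\tilde{\partial}_k u \in W^{m+1,2}_B$ for every $k \in \{1,\dots,d\}$, since any magnetic multi-index of length $\le m+2$ is either empty (and $u \in L^2$) or factors as $\tilde{\partial}^{\beta'} \tilde{\partial}_{\beta_1}$ with $|\beta'| \le m+1$.

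For the inductive step, assume the statement for $m-1$ and suppose $H_B f \in W^{m,2}_B$. Since $W^{m,2}_B \subseteq W^{m-1,2}_B$, the hypothesis already yields $f \in W^{m+1,2}_B$. Fix $k$ and set $g := \tilde{\partial}_k f$; then
\[
  H_B g \;=\; \tilde{\partial}_k (H_B f) + [H_B, \tilde{\partial}_k] f \;=\; \tilde{\partial}_k (H_B f) + 2i \sum_{j=1}^d B_{jk}\, \tilde{\partial}_j f .
\]
Here $\tilde{\partial}_k(H_B f) \in W^{m-1,2}_B$ because $H_B f \in W^{m,2}_B$, and each $\tilde{\partial}_j f \in W^{m,2}_B \subseteq W^{m-1,2}_B$ because $f \in W^{m+1,2}_B$; hence $H_B g \in W^{m-1,2}_B$, and the induction hypothesis applied to $g$ gives $\tilde{\partial}_k f = g \in W^{m+1,2}_B$. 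Since this holds for every $k$ and $f \in L^2$, the equivalence above gives $f \in W^{m+2,2}_B$.

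It remains to treat $m = 0$. The plan is to establish, for $f \in C_c^\infty(\R^d)$, the two a priori identities (all norms and inner products being those of $L^2(\R^d)$)
\[
  \sum_{k=1}^d \| \tilde{\partial}_k f \|^2 = \langle f, H_B f\rangle, \qquad \sum_{j,k=1}^d \| \tilde{\partial}_j \tilde{\partial}_k f \|^2 = \| H_B f \|^2 + 2i \sum_{j,k=1}^d B_{jk} \langle \tilde{\partial}_k f, \tilde{\partial}_j f\rangle ,
\]
the first being one integration by parts using that each $\tilde{\partial}_k$ is formally symmetric, the second following by moving one $\tilde{\partial}_j$ across in $\| \tilde{\partial}_j \tilde{\partial}_k f \|^2 = \langle \tilde{\partial}_k f, \tilde{\partial}_j^2 \tilde{\partial}_k f\rangle$, summing in $j$ to produce $H_B$, commuting it past $\tilde{\partial}_k$ via the identity above, and using $\sum_k \langle \tilde{\partial}_k f, \tilde{\partial}_k(H_B f)\rangle = \| H_B f\|^2$. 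With $|B_{jk}| \le \|B\|_{\mathrm{f}}$ and Cauchy--Schwarz this yields $\sum_{j,k} \| \tilde{\partial}_j \tilde{\partial}_k f \|^2 \le \| H_B f\|^2 + 2\|B\|_{\mathrm{f}}\, \|f\|\, \|H_B f\|$, so all magnetic derivatives of $f$ of order $\le 2$ are controlled by $\|f\| + \|H_B f\|$. To reach a general $f \in L^2$ with $H_B f \in L^2$, I would use that $C_c^\infty(\R^d)$ is a core for $H_B$: the operator $H_B = (-i\nabla - A)^2$ with the polynomially bounded potential $A = \tfrac12 Bx$, which moreover satisfies $\operatorname{div} A = \tfrac12 \operatorname{tr} B = 0$, is essentially self-adjoint on $C_c^\infty(\R^d)$ (Leinfelder--Simader), so its closure is the maximal distributional realization. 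Taking $f_n \in C_c^\infty$ with $f_n \to f$ and $H_B f_n \to H_B f$ in $L^2$, the two identities make $(\tilde{\partial}^\beta f_n)_n$ Cauchy in $L^2$ for each $|\beta| \le 2$; since the magnetic derivatives are closed operators, the limits are the distributional $\tilde{\partial}^\beta f$, whence $f \in W^{2,2}_B$.

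The main obstacle I expect is not the algebra but this last functional-analytic step: transferring the integration-by-parts and commutator identities, transparent on $C_c^\infty$, to an $f \in L^2$ whose equation $H_B f \in L^2$ holds only distributionally. One needs that such $f$ is approximable in the graph norm of $H_B$ by test functions --- equivalently, that the distributional domain of $H_B$ coincides with the domain of its self-adjoint realization. If one wishes to avoid invoking essential self-adjointness, this must be carried out by hand through mollification and cutoffs, keeping track of the first-order commutator errors $[\tilde{\partial}_k, \chi_R]$ produced by cutoffs $\chi_R$ and of the error in commuting the multiplication by $A_k$ with mollification; both are of lower order and can be absorbed by a bootstrap.
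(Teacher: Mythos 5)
The paper does not actually prove this lemma: it is quoted verbatim from Sj\"ostrand's work and the text only says ``recall the following auxiliary lemma from [Sj\"o91, Lemma~2.1]''. So there is no in-paper proof to compare against; what you have written is a self-contained substitute.

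Your argument is correct, and it is a sensible elementary route for the constant-field case. The commutator identity \([H_B,\tilde\partial_k]=2i\sum_j B_{jk}\tilde\partial_j\) is exactly the one the paper itself uses in the proof of the recursion for \(X_m, Y_m\), and the observation that this commutator is again first order is the heart of the bootstrap. The inductive step is clean: the distributional identity \(H_B(\tilde\partial_k f)=\tilde\partial_k(H_Bf)+[H_B,\tilde\partial_k]f\) is legitimate because it is an algebraic relation among differential operators with smooth (polynomial) coefficients, valid on distributions, and each term on the right lands in \(W^{m-1,2}_B\) by the inductive hypothesis. Your base-case identities
\(\sum_k\|\tilde\partial_kf\|^2=\langle f,H_Bf\rangle\) and
\(\sum_{j,k}\|\tilde\partial_j\tilde\partial_kf\|^2=\|H_Bf\|^2+2i\sum_{j,k}B_{jk}\langle\tilde\partial_kf,\tilde\partial_jf\rangle\)
check out, and the closing estimate is fine. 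You are also right that the one nontrivial functional-analytic point is the identification of the maximal (distributional) realization of \(H_B\) with the closure of its restriction to \(C_c^\infty\); essential self-adjointness (Leinfelder--Simader, or more simply Kato's inequality since \(A\) here is smooth and polynomially bounded with \(\operatorname{div}A=\tfrac12\operatorname{tr}B=0\)) is exactly what is needed, since for a formally self-adjoint differential operator the adjoint of the minimal realization is the maximal one, and essential self-adjointness forces closure = adjoint = maximal. Two cosmetic remarks: with the paper's convention \(\tilde\partial^\beta=\tilde\partial_{\beta_1}\cdots\tilde\partial_{\beta_k}\), the factorization you want in the ``elementary equivalence'' is \(\tilde\partial^{\beta'}\tilde\partial_{\beta_k}\) (peel off the \emph{rightmost} derivative), not \(\tilde\partial^{\beta'}\tilde\partial_{\beta_1}\); and you should state explicitly that in the inductive step \(g=\tilde\partial_k f\in L^2\) because \(f\in W^{m+1,2}_B\) with \(m\geq1\). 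Neither affects the correctness of the argument.
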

\begin{lemma}\label{Lemma: Ran and inf mag diff}
	For every $E>0$, $\operatorname{Ran}\mathbf{1}_{(-\infty,E]}(H_{B})\subset W^{\infty,2}_{B}$.
    \end{lemma}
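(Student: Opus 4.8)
The plan is to bootstrap regularity using Lemma~\ref{sjöst} iteratively. Fix $E > 0$ and let $f \in \operatorname{Ran}\mathbf{1}_{(-\infty,E]}(H_B)$. The key observation is that $f$ lies in the functional calculus domain of every polynomial (indeed every bounded Borel function) of $H_B$: since $f = \mathbf{1}_{(-\infty,E]}(H_B) f$, the spectral measure of $f$ is supported in $[0,E]$ (note $H_B \geq 0$), so for every $m \in \mathbb{N}$ we have $H_B^m f \in L^2(\R^d)$ with $\lVert H_B^m f \rVert_{L^2} \leq E^m \lVert f \rVert_{L^2}$. In particular $H_B^m f$ is a well-defined $L^2$ function for all $m$.

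**Next I would** run the induction. The claim is that $f \in W^{2m,2}_B$ for every $m \in \mathbb{N}$; since $W^{\infty,2}_B = \bigcap_{m} W^{m,2}_B$ and $W^{2m,2}_B \subset W^{m,2}_B$, this suffices. The base case $m = 0$ is trivial ($f \in L^2 = W^{0,2}_B$). For the inductive step, suppose $f \in W^{2m,2}_B$ for some $m \geq 0$; I want to upgrade to $W^{2m+2,2}_B$. Apply the same reasoning to the function $g := H_B^m f \in L^2(\R^d)$: one checks that $g \in W^{2m,2}_B$ as well — in fact, more directly, one applies Lemma~\ref{sjöst} to the pair $(H_B^{m-1} f, \ldots)$. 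The cleanest route: by induction on $m$, $H_B^{m} f \in L^2$ and we wish to show $f \in W^{2m,2}_B$. We have $H_B (H_B^{m-1} f) = H_B^m f \in L^2 = W^{0,2}_B$, and by the inductive hypothesis applied to the lower-order statement, $H_B^{m-1} f \in W^{2(m-1),2}_B$; but we actually need $H_B^{m-1} f$ to have more regularity. So instead I proceed downward: fix the target $m$, set $h_j := H_B^{j} f$ for $j = 0, 1, \dots, m$. Then $h_m \in L^2 = W^{0,2}_B$. Since $H_B h_{m-1} = h_m \in W^{0,2}_B$, Lemma~\ref{sjöst} gives $h_{m-1} \in W^{2,2}_B$. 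Since $H_B h_{m-2} = h_{m-1} \in W^{2,2}_B$, Lemma~\ref{sjöst} gives $h_{m-2} \in W^{4,2}_B$. Iterating, $h_{m-k} \in W^{2k,2}_B$, and at $k = m$ we obtain $h_0 = f \in W^{2m,2}_B$. As $m$ was arbitrary, $f \in W^{\infty,2}_B$.

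**The main (and really only) obstacle** is the preliminary fact that $H_B^m f \in L^2$ for all $m$, i.e.\ that membership in the spectral subspace $\operatorname{Ran}\mathbf{1}_{(-\infty,E]}(H_B)$ entails lying in the domain of every power of $H_B$. This is a standard consequence of the spectral theorem for the self-adjoint operator $H_B$ together with non-negativity of $H_B$ (which holds since $H_B = \tilde\nabla^2$ with $\tilde\nabla$ formally symmetric, so $\langle H_B \phi, \phi\rangle = \lVert \tilde\nabla \phi \rVert^2 \geq 0$), but it is worth stating explicitly since it is what makes the bootstrap terminate-free. Everything after that is a finite, purely mechanical iteration of Lemma~\ref{sjöst}, with no estimates to track — the constants will reappear quantitatively only later in Section~\ref{sec:analyticity} when one needs norm bounds rather than mere membership.
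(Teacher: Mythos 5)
Your proposal is correct and takes essentially the same approach as the paper: both use the spectral theorem to get $H_B^m f \in L^2$ with $\lVert H_B^m f\rVert \leq E^m \lVert f\rVert$, and then run a downward iteration of Lemma~\ref{sjöst} starting from $h_m = H_B^m f \in L^2 = W^{0,2}_B$ to climb back up to $f \in W^{2m,2}_B$. The only difference is cosmetic -- the paper phrases it as a downward induction on $k$ with $H_B^k f \in W_B^{2(N-k),2}$, while you reach the same scheme after a brief false start with a forward induction.
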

    The proof of Lemma~\ref{Lemma: Ran and inf mag diff} is given in Appendix~\ref{app:proofs}.

\begin{remark}\label{remark: decomposition}
Let us discuss the spectrum of the Landau operator, see also~\cite{Goffeng-10}.
It is well-known that in dimension $d = 2$, the Landau operator has purely discrete spectrum, consisting of infinitely degenerate eigenvalues at the Landau levels $\{B, 3B, 5B, \dots\}$ where $B > 0$ is the magnetic field strength.
In higher dimensions, the situation is more involved and, depending on the dimension, discrete or continuous spectrum can emerge.

Indeed, consider a real skew-symmetric matrix $B \in \R^{d \times d}$ and a corresponding magnetic potential $A(x) = \tfrac12 B x$. 
By the real normal form for skew-symmetric matrices \cite[Corollary 2.5.11]{HornJ-12}, we can find an orthogonal matrix $U \in O(d)$ and positive numbers $C_1, \dots, C_m > 0$, with $2m = \operatorname{rank}(B)$, such that
\[
\mathcal{C} := U^\top B U = \bigoplus_{j=1}^m
\begin{pmatrix}
0 & C_j\\
- C_j & 0
\end{pmatrix} \oplus 0_{d-2m}.
\]
Applying the unitary transformation $(\mathcal{U} f)(y) = f(Uy)$ on $L^2(\R^d)$, we see that the magnetic Laplacian
\(
H_B := (-i \nabla_x - A(x))^2
\)
is unitarily equivalent to
\(
(-i \nabla_y - A_{\mathcal{C}}(y))^2, \quad \text{where} \quad A_{\mathcal{C}}(y) = \tfrac12 \mathcal{C} y.
\)
Because $\mathcal{C}$ is block-diagonal, each nonzero $2\times 2$ block acts only on its own pair of coordinates, while the remaining $d-2m$ coordinates correspond to the nullspace of $B$ where $A_{\mathcal{C}}$ vanishes. 
More explicitly, in the $i$-th $2$-plane we have 
\[
A_{\mathcal{C},i}(y_i, y_{i+1}) = \tfrac12 C_i(-y_{i+1}, y_i), \qquad i = 1, \dots, m.
\]
This means that the magnetic field acts independently in each $2$-plane, and there is no magnetic effect in the nullspace.

As a consequence, $H_B$ decomposes as a sum of $m$ two-dimensional Landau Hamiltonians (each with field strength $C_j$) and the standard Laplacian on $\R^{d-2m}$:
\[
H_B \;\cong\; \sum_{j=1}^m H^{(2)}_{C_j} \otimes I + I \otimes (-\Delta_{\R^{d-2m}}).
\]
Each $H^{(2)}_{C_j}$ has the well-known discrete Landau levels
\(\{(2n_j + 1)C_j : n_j \in \mathbb{N}_0\}\), while the Laplacian on the nullspace contributes a continuous spectrum $[0,\infty)$. Hence, by \cite[Corollary 7.25]{Schmuedgen-12}, the spectrum of $H_B$ is
\[
\sigma(H_B) = \Bigl\{ \sum_{j=1}^m (2n_j + 1)C_j + \lvert \xi \rvert^2 : n_j \in \mathbb{N}_0,\; \xi \in \R^{d-2m} \Bigr\},
\]
and in particular
\[
E_0=\inf \sigma(H_B) = \sum_{j=1}^m C_j.
\]
If $d-2m>0$, which is in particular the case when $d$ is odd, each discrete "Landau level" is broadened into a continuous band due to the contribution of free Laplacian.

Finally, let us emphasize for later use that $B$ and $\mathcal{C}$ are unitarily equivalent and thus have the same spectral norm:
\begin{equation}\label{eqn:operator_norm_equivalence}
    \lvert B \rvert_{\mathrm{op}}
=\sup_{x\neq 0}\frac{\lvert B x \rvert}{\lvert x \rvert}
=\sup_{x\neq 0}\frac{\lvert \mathcal{C}x \rvert}{\lvert x \rvert}
=\lvert \mathcal{C} \rvert_{\mathrm{op}}.
\end{equation}
\end{remark}

\section{Results and Applications}
The main result of this article is the following sharp spectral inequality for the magnetic Laplacian $H_{B}$ on $\R^{d}$ with a constant magnetic field.

\begin{theorem}\label{Theorem: spectral inequality}
Let \( S \subset \R^d \) be an \((\ell, \rho)\)-thick set. Then there exist constants \( C_1, C_2, C_3, C_4 > 0 \), depending only on the dimension and the structure of the set, such that for every energy level \( E > 0 \) and every function \( f \in \operatorname{Ran} \mathbf{1}_{(-\infty, E]}(H_B) \), the following inequality holds:
\[
\lvert f \rvert_{L^2(\R^d)}^2 \leq 
\left( \frac{C_1}{\rho} \right)^{
C_2 + C_3 |\ell|_1 \sqrt{E} + C_4 
 |\ell|_1^2 \sqrt{ \|B^{2}\|_{1} } 
} 
\, \lvert f \rvert_{L^2(S)}^2.
\]
\end{theorem}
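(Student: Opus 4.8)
The plan is to follow the Kovrijkine-type analytic continuation strategy, adapted to the magnetic setting, as in~\cite{Kovrijkine-00, PfeifferT-25}. Fix $f \in \operatorname{Ran}\mathbf{1}_{(-\infty,E]}(H_B)$; by Lemma~\ref{Lemma: Ran and inf mag diff} it lies in $W^{\infty,2}_B(\R^d)$. Using the decomposition in Remark~\ref{remark: decomposition} one may assume $B = \mathcal{C}$ is in real normal form. The core of the argument is a \emph{magnetic Bernstein inequality}: a bound of the form $\lVert \tilde{\partial}^\beta f \rVert_{L^2(\R^d)} \le (c\sqrt{E} + c'\lVert B^2 \rVert_1^{1/4}\,|\beta|\,\dots)^{|\beta|}\lVert f \rVert_{L^2(\R^d)}$ for all multi-indices $\beta$, which controls all higher-order magnetic derivatives in terms of $E$ and the field strength. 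The proof of this is where the new work lies: unlike the $d=2$ case where sums of magnetic derivatives recombine exactly into powers of $H_B$, here one must, following the paper's stated strategy, \emph{complete} the relevant sums $\sum_\beta \lVert \tilde{\partial}^\beta f \rVert^2$ with non-negative terms so that they are dominated by $\langle H_B^n f, f\rangle \le E^n \lVert f \rVert^2$, using the commutation relations $[\tilde{\partial}_k,\tilde{\partial}_l] = iB_{kl}$ to reorder products and absorbing the commutator corrections (which bring down powers of entries of $B$, ultimately $\lVert B^2\rVert_1$) into the estimate.

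With the Bernstein inequality in hand, the second step is to pass from $L^2$-derivative bounds to pointwise real-analyticity control of $f$ on cubes. On each translate $Q$ of an $\ell$-hyperrectangle, the Bernstein estimates show that $f$ (more precisely, $f$ gauged by a suitable phase so that the magnetic derivatives become ordinary derivatives locally, or directly via a power series in the $\tilde\partial_k$) extends to a holomorphic function on a complex neighbourhood of $Q$ whose size is controlled by $(|\ell|_1\sqrt{E} + |\ell|_1^2 \lVert B^2\rVert_1^{1/4})^{-1}$, with sup-norm on that neighbourhood bounded by a fixed multiple of $\lVert f\rVert_{L^2(2Q)}$ or the global $L^2$-norm. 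One then distinguishes ``good'' cubes (where the local $L^2$-mass is comparable to the global one, in the sense $\lVert f \rVert_{L^2(Q)}^2 \gtrsim$ a suitable fraction, via a Chebyshev/pigeonhole argument over a tiling of $\R^d$) from ``bad'' ones; the good cubes carry the bulk of $\lVert f\rVert_{L^2(\R^d)}^2$.

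On each good cube one invokes a Remez-type (Turán/Nazarov) inequality for analytic functions: since $f$ is analytic with the above quantitative control and $S \cap Q$ has measure at least $\rho\,\mathrm{Vol}(Q)$, one gets $\lVert f\rVert_{L^2(Q)}^2 \le (C_1/\rho)^{N}\lVert f\rVert_{L^2(S\cap Q)}^2$ where the exponent $N$ is proportional to the ``analytic complexity'' $C_2 + C_3|\ell|_1\sqrt{E} + C_4|\ell|_1^2\sqrt{\lVert B^2\rVert_1}$ — note the square root on $\lVert B^2\rVert_1$ appears because the complexity scale involves $|\ell|_1^2 \cdot \lVert B^2\rVert_1^{1/4}\cdot(\text{something})$, and squaring the $L^2$ norms or accounting for the $d$ coordinate directions turns the quarter-power into a half-power. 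Summing over good cubes and using that the bad cubes are negligible yields the claimed inequality. I expect the main obstacle to be precisely the magnetic Bernstein inequality in Step 1: making the combinatorics of non-commuting products $\tilde{\partial}^\beta$ work out so that the commutator corrections are genuinely lower-order and the leading term is controlled by $E^{|\beta|/2}$, while keeping the field-dependent constant sharp (i.e.\ scaling like $\lVert B^2\rVert_1^{1/4}$ rather than something larger) — this is the step the introduction flags as requiring ``more delicate estimates'' in $d \ge 3$, and getting the exponent on $\lVert B^2\rVert_1$ right in the final bound hinges on it.
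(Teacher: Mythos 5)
Your high-level strategy (magnetic Bernstein $\Rightarrow$ analyticity $\Rightarrow$ good/bad tiling $\Rightarrow$ Remez/Kovrijkine on good cubes $\Rightarrow$ sum) matches the paper's, and you correctly flag the Bernstein step as where the new work lies in $d\ge 3$. However, there is a genuine gap in your Step 2: you propose to pass from bounds on $\lVert \tilde{\partial}^{\beta} f\rVert_{L^2}$ to holomorphic continuation of $f$, either by ``gauging away'' the vector potential or ``directly via a power series in the $\tilde\partial_k$.'' Neither works. You cannot gauge $A$ to zero on any open set when $B\ne 0$, so $\tilde\partial_k$ never reduces to $\partial_k$; and knowing $\lVert\tilde\partial^\beta f\rVert$ does \emph{not} control $\lVert\partial^\beta f\rVert$, because $\tilde\partial_k f = i\partial_k f + A_k f$ with $A_k$ unbounded. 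Indeed, Remark~\ref{rem:magnetic-derivatives-necessary} of the paper shows that functions in $\operatorname{Ran}\mathbf{1}_{(-\infty,E]}(H_B)$ can have arbitrarily large classical gradient, so $f$ itself has no uniformly convergent Taylor series. The paper's essential device (inherited from~\cite{PfeifferT-25}) is to work with $\lvert f\rvert^{2}$ rather than $f$: the algebraic identity $i\,\partial_k(u\bar v) = \bar v\,\tilde\partial_k u - u\,\overline{\tilde\partial_k v}$ cancels the $A_k$ terms, so ordinary derivatives of $\lvert f\rvert^{2}$ are bilinear in magnetic derivatives of $f$, and the magnetic Bernstein inequality then yields analyticity of $\lvert f\rvert^{2}$. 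Your proposal misses this entirely, and as stated it stalls at the analyticity step.

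Two smaller imprecisions. First, your Bernstein bound $\lVert\tilde\partial^\beta f\rVert \le (c\sqrt E + c'\lVert B^2\rVert_1^{1/4}\lvert\beta\rvert\cdots)^{\lvert\beta\rvert}\lVert f\rVert$ should have $\sqrt{\lvert\beta\rvert}$ in place of $\lvert\beta\rvert$ to match the paper's $C_B(m)^{1/2} = (\tfrac d2(E+\sqrt{\lVert B^2\rVert_1}\,m))^{m/2}$; this matters, since $m!$ versus $m^{m/2}$ in the Taylor tail is exactly what makes the power series converge. Second, your heuristic that the final $\lVert B^2\rVert_1^{1/2}$ exponent arises from ``squaring $L^2$-norms'' is not the paper's mechanism: it comes from summing the Taylor tail, where the estimate $\sum_m (s\sqrt m)^m/m! \le \exp(2s^2+s)$ with $s\propto\lvert\ell\rvert_1\lVert B^2\rVert_1^{1/4}$ produces the $\lvert\ell\rvert_1^2\sqrt{\lVert B^2\rVert_1}$ term through $s^2$.
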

Theorem \ref{Theorem: spectral inequality} quantifies the unique continuation property for functions in spectral subspaces of the magnetic Laplacian: a function with energy at most \(E\) cannot be arbitrarily small on a thick set \(S\). The control constant grows exponentially with the energy scale \(\sqrt{E}\), the magnetic field strength \(\|B^2\|_1^{1/2}\), and crucially, with the square of the scale parameter \(|\ell|_1\) of the thick set.

We complement our main result with the following observations regarding the optimality of the exponent and the necessity of the geometric assumptions:
\begin{itemize}
    \item The quadratic dependence on the scale parameter \( |\ell|_1 \) in the exponent is optimal. By constructing a specific arrangement of holes and testing the inequality against the Gaussian ground state, one can demonstrate that the exponent must grow at least like \( |\ell|_1^2 \). This sharpness result is formally stated and proved in Remark~\ref{remark:optimality} (see Section~\ref{subsec:goodandbad}).
    \item The geometric thickness of the set \(S\) is not merely sufficient but necessary for the spectral inequality to hold. In Theorem~\ref{theorem:necessityofthickness}, we prove that if a spectral inequality of this type holds for all functions in the low-energy spectral subspace, then \(S\) must be a thick set. The proof relies on a contradiction argument using magnetic translations of the ground state to detect arbitrarily large "holes" in non-thick sets.
\end{itemize}
 The sharp spectral inequality established in Theorem~\ref{Theorem: spectral inequality} serves as a crucial analytic tool in several areas of mathematical physics. We now outline three specific applications where this quantitative unique continuation property yields immediate consequences:
\begin{itemize}
    \item \noindent\textbf{Null-Controllability of the Magnetic Heat Equation.}
A direct application of the spectral inequality (combined with the Lebeau--Robbiano strategy \cite{LebeauR-95}) is the null-controllability of the heat equation governed by the magnetic Laplacian.
Consider the system on $\R^d$:
\begin{equation}\label{eq:heat_control}
    \begin{cases}
    (\partial_t + H_B) u(t,x) = \mathbf{1}_S(x) f(t,x) & \text{in } (0,T) \times \R^d, \\
    u(0,x) = u_0(x) & \text{in } \R^d,
    \end{cases}
\end{equation}
where $S$ is a thick set and $f$ is the control function supported on $S$.
Theorem~\ref{Theorem: spectral inequality} implies that the system is null-controllable in any time $T>0$. That is, for any initial state $u_0 \in L^2(\R^d)$, there exists a control $f \in L^2((0,T)\times S)$ such that the solution satisfies $u(T, \cdot) \equiv 0$.
Moreover, the explicit dependence of our constants on the geometry allows for precise estimates on the cost of control, extending the results of \cite{WangWZZ-19} to general thick sets.

\item \noindent\textbf{Semiclassical Bounds and Eigenvalue Sums.}
The two-dimensional version of this spectral inequality was recently employed by Frank, Larson, and Pfeiffer \cite{FrankLS-25} to derive improved semiclassical estimates for the Landau Hamiltonian. Specifically, they established strengthened Berezin-Li-Yau and Kröger inequalities for the sum of eigenvalues on bounded domains $\Omega \subset \R^2$.
By removing the dimensional restriction, Theorem~\ref{Theorem: spectral inequality} provides the necessary analytic input to extend their approach to $\R^d$. In particular, it paves the way for proving that for bounded domains $\Omega \subset \R^d$ ($d\ge 3$), the sum of eigenvalues satisfies sharp lower bounds reflecting the magnetic geometry, correcting the standard Weyl asymptotics by a precise geometric factor related to the thickness of $\Omega$.

\item \noindent\textbf{Anderson localization for Continuous Random Operators.}
Our result provides the quantitative unique continuation estimates required for the analysis of multi-dimensional Anderson Hamiltonians with magnetic fields. 
Consider the random operator 
\[
H_\omega = H_B + V_\omega \quad \text{on } L^2(\R^d), \quad d \ge 3,
\]
where $V_\omega$ is a random alloy-type potential. A key step in the Bootstrap Multiscale Analysis (MSA) for proving Anderson localization (pure point spectrum) is the \emph{Wegner estimate}, which requires lifting the uncertainty of the potential to the spectral projections.
The scale-invariant nature of Theorem~\ref{Theorem: spectral inequality} allows one to prove that eigenvalues of the finite-volume restrictions of $H_\omega$ are sensitive to variations in the potential, a crucial ingredient for establishing localization at the bottom of the spectrum in dimensions $d \geq 3$, see the discussion and references in~\cite{PfeifferT-25}.
Note that the spectral theory of the higher-dimensional Landau operator is fundamentally more complicated since the two-dimensional operator enjoys the fact that its spectrum is pure point, a fact which does not necessarily hold in higher dimensions. 

\end{itemize}

\section{Algebraic Framework for Magnetic Derivatives and High-Order Estimates}\label{sec:analyticity}
In this section, we prove magnetic Bernstein inequalities in \( \R^d \). 
Let us comment on the strategy first.
The eventual goal is to show analyticity of functions in spectral subspaces of the magnetic Schrödinger operator.
For this, we will show convergence of their Taylor series which will require to control the $L^\infty$-norms of all of their derivatives. 
As a first step, we will control the $L^2$-norm of their \emph{magnetic derivatives}.
For the free Laplace operator, this follows from the fact that the $L^2$ norm of the sum over all derivatives of a given order can be identified as a quadratic form associated with the corresponding power of the Laplacian.
This strategy can be adapted to the two-dimensional Landau operator~\cite{PfeifferT-25}  in which case powers of the Laplacian are replaced by certain polynomials. 
But it fails a priori in higher dimension since sums over all magnetic derivatives can no longer be expressed as quadratic forms associated with polynomials of the Landau operator.
Our solution is a careful analysis and estimates of the corresponding expressions.
More precisely, even though expressions of the form
\(
\| \widetilde{\partial}_{\alpha_1} \widetilde{\partial}_{\alpha_2} \cdots \widetilde{\partial}_{\alpha_n} f \|_{L^2(\R^d)}^2,
\)
 \( \alpha_j \in \{1, \ldots, d\} \), cannot generally be written exactly in terms of \( H_B^k \), we will show that they can at least be bounded from above by explicit quantities involving \( H_B \). 

To proceed, we analyze the algebra generated by the operators \( \widetilde{\partial}_1, \ldots, \widetilde{\partial}_d \), including their commutation relations and the structure they induce.
\begin{definition}
For each \( m \in \mathbb{N} \), we define the operators \( X_m \) and \( Y_m \) by
\[
X_m := \sum_{k=1}^d \tilde\partial_k H_{B}^{m} \tilde\partial_k, \qquad
Y_m := i \sum_{j,l=1}^d  B_{jl} \tilde\partial_l H_{B}^{m} \tilde\partial_j.
\]
\end{definition}
Note that $X_m$ is symmetric and nonnegative in the sense that for all $f,g \in W_B^{\infty,2}(\R^d)$, we have
\[
    \left\langle f, X_m g \right\rangle
    =
    \left\langle X_m f, g \right\rangle,
    \quad
    \text{and}
    \quad
    \left\langle f, X_m f \right\rangle \geq 0, 
\]
and, due to skew-symmetry of $B$, the same holds for $Y_m.$

\noindent The initial values of these sequences are given by
\begin{align*}
X_0 &= \sum_{k=1}^{d} \tilde\partial_{k}^{2} = H_{B}, \\
Y_0 &= i \sum_{j,l=1}^d B_{jl} \tilde\partial_{l} \tilde\partial_{j} 
= i \sum_{j>l} B_{jl} ( \tilde\partial_{l} \tilde\partial_{j} - \tilde\partial_{j} \tilde\partial_{l} ) \\
&= i \sum_{j>l} B_{jl} (iB_{lj}) 
= \sum_{j>l} |B_{jl}|^2 = \| B \|_f^2,
\end{align*}
where in the second equality for $Y_0$, we used the fact that $B$ is skew-symmetric. 

Because each magnetic derivative \(\tilde\partial_{k}\) is self-adjoint on
\(L^{2}(\R^{d})\), integration by parts gives, for every
multi-index \(\alpha=(\alpha_{1},\dots,\alpha_{m})\),
\[
\bigl\|\tilde\partial_{\alpha_1}\cdots\tilde\partial_{\alpha_m} f\bigr\|_{L^2}^2
   = \left\langle
       f,\,
       \tilde\partial_{\alpha_m}\cdots\tilde\partial_{\alpha_1}
       \tilde\partial_{\alpha_1}\cdots\tilde\partial_{\alpha_m} f
     \right\rangle.
\]
Summing over all \(\alpha\in\{1,\dots,d\}^{m}\), we can abbreviate this as
\begin{equation}\label{Equation: MBI pre}
    \sum_{\alpha\in\{1,\dots,d\}^{m}}
   \bigl\|\tilde\partial_{\alpha_1}\cdots\tilde\partial_{\alpha_m} f\bigr\|_{L^2}^{2}
   = \bigl\langle f, R^{m}(\mathrm{Id}) f \bigr\rangle,
\end{equation}
where the operator $R$, mapping polynomials in the variables $\tilde \partial_k$ to itself is defined as
\[
R := \sum_{k=1}^{d} \tilde\partial_{k}(\,\cdot\,)\tilde\partial_{k}.
\]
In particular, \(R^{m}(\mathrm{Id})\) represents the $m$–fold
symmetrized product of the magnetic derivatives,
that is, the sum over all ordered multi–indices
\(\alpha\) of
\(\tilde\partial_{\alpha_m}\cdots\tilde\partial_{\alpha_1}
 \tilde\partial_{\alpha_1}\cdots\tilde\partial_{\alpha_m}\).
This identity captures the total \(L^{2}\)-norm of all order-\(m\)
magnetic derivatives through powers of \(R\) and serves as the starting
point for the magnetic Bernstein inequalities, which provide
quantitative bounds of these norms for
\(f\in W_{B}^{\infty,2}(\R^{d})\).
Thus, deriving an upper bound on \( R^m(\mathrm{Id}) \) leads directly to an upper bound on the left-hand side of the magnetic Bernstein inequality. In particular, if
\(
R^m(\mathrm{Id}) \leq F_m(H_B),
\)
for some function \( F_m \) of the magnetic Laplacian \( H_B \), then we obtain the operator form of the magnetic Bernstein inequality:
\[
\sum_{\alpha \in \{1, \dots, d\}^m} \| \tilde\partial_{\alpha_1} \cdots \tilde\partial_{\alpha_m} f \|_{L^2}^2 \leq \langle f, F_m(H_B) f \rangle.
\]
The key result of this section is the following Proposition~\ref{Lemma: Bound on R^m(Id)}, which shows that \( R^m(\mathrm{Id}) \) is bounded by a polynomial in \( H_B \). Building on this, Theorem~\ref{MBI} provides an explicit form of the polynomial bound, allowing us to replace \( R^m(\mathrm{Id}) \) in \eqref{Equation: MBI pre} with a polynomial in \( H_B \).

\begin{proposition}
Let \( Z_m := \frac{Y_m}{\sqrt{\|B^2\|_1}} \).
Then, for all integers \(m\ge0\),
\[
\begin{bmatrix}
 X_m\\ Z_m
\end{bmatrix}
\le
\begin{bmatrix}
 X_0 & Z_0
\end{bmatrix}
\begin{bmatrix}
 H_B & 2\sqrt{\|B^2\|_1}\\
 2\sqrt{\|B^2\|_1} & H_B
\end{bmatrix}^{m}
\]
where the inequality is understood in the sense of quadratic forms and line by line.
In particular,
\begin{equation}\label{boundonH}
\begin{split}
2R(H_B^m)=2X_{m+1}\le
&\left(H_B-\frac{||B||_f^{2}}{\sqrt{\|B^2\|_1}}\right)
   \bigl(H_B-2\sqrt{\|B^2\|_1}\bigr)^{m}\\
&+\left(H_B+\frac{||B||_f^{2}}{\sqrt{\|B^2\|_1}}\right)
   \bigl(H_B+2\sqrt{\|B^2\|_1}\bigr)^{m}.\qedhere
\end{split}
\end{equation}
\end{proposition}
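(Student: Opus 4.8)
The plan is to prove the matrix inequality by induction on $m$, tracking the pair of quadratic forms $(X_m, Z_m)$ simultaneously rather than each one in isolation; the coupling between them is exactly what makes a clean induction possible. The base case $m=0$ is immediate: the right-hand side is $\begin{bmatrix} X_0 & Z_0\end{bmatrix}$ times the identity matrix, which reproduces $(X_0, Z_0)$. For the inductive step, I would first establish the fundamental one-step recursion relating $(X_{m+1}, Y_{m+1})$ to $(X_m, Y_m)$. Writing $R(\cdot) = \sum_k \tilde\partial_k (\cdot) \tilde\partial_k$ and using the commutation relations $[\tilde\partial_k,\tilde\partial_l] = i B_{kl}$, one computes $X_{m+1} = \sum_k \tilde\partial_k H_B^m \tilde\partial_k$ expanded by inserting $H_B = \sum_l \tilde\partial_l^2$ and commuting the outer $\tilde\partial_k$ past one factor of $H_B^m$. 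The expectation is that this produces a principal term $H_B^{m+1}$-type contribution together with a term proportional to $Y_m$, i.e. something of the shape $X_{m+1} = (\text{contraction of } H_B \text{ against } X_m) + (\text{cross term with } Y_m)$, and symmetrically $Y_{m+1}$ feeds back into $X_m$ and $Y_m$. Identifying these coefficients precisely — in particular getting the off-diagonal constant $2\sqrt{\|B^2\|_1}$ and seeing why $\|B^2\|_1$ is the correct normalization so that $Z_m = Y_m/\sqrt{\|B^2\|_1}$ closes the recursion with a \emph{symmetric} $2\times 2$ matrix — is the technical heart of the argument.

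Concretely, I would aim to prove an inequality of the form
\[
\begin{bmatrix} X_{m+1} \\ Z_{m+1} \end{bmatrix}
\le
\begin{bmatrix} H_B & 2\sqrt{\|B^2\|_1} \\ 2\sqrt{\|B^2\|_1} & H_B \end{bmatrix}
\begin{bmatrix} X_m \\ Z_m \end{bmatrix},
\]
interpreted line by line as quadratic forms, where on the right the operator $H_B$ and the scalar commute with the entries $X_m, Z_m$ in the relevant sense (or more carefully: $\langle f, X_{m+1} f\rangle \le \langle f, (\text{appropriate operator}) f\rangle$ after using the nonnegativity of certain cross terms). The subtlety is that $Y_{m+1}$ need not literally equal a linear combination of $H_B$-weighted $X_m$ and $Y_m$; rather, the defect is a sum of squares of magnetic derivatives, hence a nonnegative form, which is why the identity degrades to an inequality. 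I would isolate the relevant nonnegative remainder — something like $\sum_{j,l} \langle \cdot, \tilde\partial_j (\text{positive})\tilde\partial_l \cdot\rangle \ge 0$ combined with a Cauchy–Schwarz/AM–GM step to absorb indefinite cross terms into the diagonal, which is also where the constant $2$ (rather than $1$) in front of $\sqrt{\|B^2\|_1}$ enters. Once the one-step inequality is in hand, iterating it $m$ times and using that the $2\times 2$ matrix $\begin{bmatrix} H_B & 2\sqrt{\|B^2\|_1} \\ 2\sqrt{\|B^2\|_1} & H_B\end{bmatrix}$ has nonnegative entries (so the inequality is preserved under multiplication) gives the claimed $m$-th power bound; monotonicity of the quadratic-form order under such products must be checked but is routine given nonnegativity.

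For the ``In particular'' part, I would specialize: $2R(H_B^m) = 2X_{m+1}$ by definition of $R$, and the first row of the matrix inequality reads $X_{m+1} \le \begin{bmatrix} X_0 & Z_0\end{bmatrix} M^{m} e_1$ where $M$ is the $2\times 2$ matrix and $e_1$ picks the first column. To get the explicit closed form \eqref{boundonH}, I would diagonalize $M$: since $M = H_B\, I + 2\sqrt{\|B^2\|_1}\,\sigma_x$ with $\sigma_x$ the flip, its eigenvalues are $H_B \pm 2\sqrt{\|B^2\|_1}$ with eigenvectors $(1,1)$ and $(1,-1)$. Expanding $\begin{bmatrix} X_0 & Z_0\end{bmatrix} = \begin{bmatrix} H_B & \|B\|_f^2/\sqrt{\|B^2\|_1}\end{bmatrix}$ (using $X_0 = H_B$ and $Y_0 = \|B\|_f^2$ computed in the text) in this eigenbasis and reading off the first component of $M^m$ applied to it yields precisely the two terms $\bigl(H_B \mp \|B\|_f^2/\sqrt{\|B^2\|_1}\bigr)\bigl(H_B \mp 2\sqrt{\|B^2\|_1}\bigr)^m$, summed. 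The main obstacle I anticipate is not this final algebra but step two: correctly carrying out the commutator bookkeeping that expresses $X_{m+1}$ and $Y_{m+1}$ in terms of $X_m$, $Y_m$, identifying the nonnegative defect term, and verifying that $\|B^2\|_1$ (the $\ell^1$-induced matrix norm of $B^2$) is exactly the quantity needed to make the off-diagonal entries symmetric after the normalization $Z_m = Y_m/\sqrt{\|B^2\|_1}$ — this is where the genuinely new higher-dimensional input lies, since in $d=2$ the analogous recursion closes without needing the auxiliary $Y$-sequence at all.
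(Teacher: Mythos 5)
Your proposal is correct and follows essentially the same route as the paper: the one-step recursion for $(X_m, Y_m)$ via the commutator $[H_B,\tilde\partial_k]=2i\sum_{\ell} B_{\ell k}\tilde\partial_\ell$, the bound on the cross term in the $Y$-recursion, the rescaling $Z_m = Y_m/\sqrt{\|B^2\|_1}$ to symmetrize the transfer matrix, iteration of the row-vector inequality, and diagonalization to read off the closed form are all exactly what the paper does. One clarification on your ``nonnegative defect'' heuristic: the extra term $2\sum_{k,l}(-B^2)_{kl}\,\tilde\partial_l H_B^{m-1}\tilde\partial_k$ appearing in the $Y$-recursion is indeed nonnegative (since $-B^2\succeq 0$), but because you need an \emph{upper} bound on $Y_m$ you cannot simply drop it --- the Cauchy--Schwarz and AM--GM step you then invoke is precisely what bounds it from \emph{above} by $2\|B^2\|_1 X_{m-1}$ (equivalently, one uses $\|B^2\|_1 I + B^2 \succeq 0$), and this is how the paper closes the recursion.
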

\begin{proof}
 To derive recurrence relations for the sequences $X_m$ and $Y_m$, 
we begin by expanding the power $H_B^m$. 
For this expansion, we use the following commutator identity, since \([\tilde\partial_\ell,\tilde\partial_k]= i B_{\ell k}\),
\[
[H_B,\tilde\partial_k]
   = \sum_{\ell=1}^{d}
      [\tilde\partial_\ell^{2},\tilde\partial_k]
   = \sum_{\ell=1}^{d}
      \bigl(\tilde\partial_\ell[\tilde\partial_\ell,\tilde\partial_k]
           +[\tilde\partial_\ell,\tilde\partial_k]\tilde\partial_\ell\bigr)
   = \sum_{\ell=1}^{d} 2 i B_{\ell k}\,\tilde\partial_\ell .
\] 
to obtain the following recursive relations for the sequence $X_m$:
\begin{align*}
	X_m =& \sum_{k=1}^d\tilde\partial_k H_{B}^{m} \tilde\partial_k =\sum_{k=1}^d\tilde\partial_k H_{B}^{m-1}\Big( \tilde\partial_k H_B+\sum_{j=1}^d  2i B_{jk} \tilde\partial_j\Big)  \\=& X_{m-1} H_B +2Y_{m-1},
	\end{align*}
	and similarly for $Y_m$
	\begin{align*}
	Y_m =&i\sum_{j,l=1}^d  B_{jl} \tilde\partial_l H_{B}^{m} \tilde\partial_j =i\sum_{j,l=1}^d  B_{jl} \tilde\partial_l H_{B}^{m-1} \Big( \tilde\partial_j H_B+\sum_{k=1}^d  2i B_{kj} \tilde\partial_k \Big) \\
	=& Y_{m-1} H_B+ i\sum_{j,l=1}^d  B_{jl} \tilde\partial_l H_{B}^{m-1} \sum_{k=1}^d  2i B_{kj} \tilde\partial_k\\
	=& Y_{m-1} H_B-2 \sum_{k,l=1}^d \Big( \sum_{j=1}^d B_{kj} B_{jl}\Big)  \tilde\partial_l H_{B}^{m-1}  \tilde\partial_k,
	\end{align*}
	where in the last step, we changed the order of summations. The inner sum in the last line gives the $kl^{th}$ entry of $B^2$:
	\begin{align*}
Y_m	=& Y_{m-1} H_B-2 \sum_{k,l=1}^d  (B^{2}) _{kl} \tilde\partial_l H_{B}^{m-1} \tilde\partial_k\\ 
	\leq & Y_{m-1} H_B+2\max_{l}{\sum_{k=1}^d  \left| (B^{2}) _{kl}\right| } \sum_{k=1}^d   \tilde\partial_k H_{B}^{m-1} \tilde \partial_k\\
	=&Y_{m-1} H_B+2 \|B^2\|_1X_{m-1}.
\end{align*}
In the second step, the transition to inequality can be justified in the following manner. For any $f\in W^{\infty,2}_B(\R^d)$, we have	
\begin{align*}
	\left\langle  f,-2\sum_{k,l=1}^d  (B^{2}) _{kl} \tilde\partial_l H_{B}^{m-1}  \tilde\partial_k f\right\rangle \leq& \left| -2\right| \sum_{k,l=1}^d  \left| (B^{2}) _{kl}\right|   \left| \langle f,\tilde\partial_l H_{B}^{m-1}  \tilde\partial_k f\rangle\right| \\ 
	\leq& \sum_{k,l=1}^d  \left| (B^{2}) _{kl}\right| 2\left| \langle H_{B}^{(m-1)/2} \tilde\partial_l f, H_{B}^{(m-1)/2} \tilde\partial_k f  \rangle\right| .
	\end{align*} 
Using the Cauchy-Schwarz inequality and $2ab\leq a^2 +b^2$: 
	\begin{align*}
	&\left\langle  f,-2\sum_{k,l=1}^d  (B^{2}) _{kl} \tilde\partial_l H_{B}^{m-1}  \tilde\partial_k f\right\rangle \leq\sum_{k,l=1}^d  \left| (B^{2}) _{kl}\right| \cdot 2\|  H_{B}^{(m-1)/2} \tilde\partial_l f \| \cdot\|   H_{B}^{(m-1)/2} \tilde\partial_k f \| \\
	&\leq \sum_{k,l=1}^d  \left| (B^{2}) _{kl}\right|  \left[ \| H_{B}^{m/2} \tilde\partial_l f\| ^2 +\| H_{B}^{m/2} \tilde\partial_k f \| ^{2}\right] \\
	&\leq\left(  \max_{l}{\sum_{k=1}^d  \left| (B^{2}) _{kl}\right| }\right) \sum_{l=1}^d \| H_{B}^{m/2} \tilde\partial_l f\| ^2
    +\left( \max_{k}{\sum_{l=1}^d  \left| (B^{2})_{kl}\right| } \right) 
	\sum_{k=1}^d \| H_{B}^{m/2} \tilde\partial_k f \|^{2} .
	\end{align*}
	Since $B^2$ is symmetric, $ \max_{l}{\sum_{k=1}^d  \left| (B^{2}) _{kl}\right| }$ and $\max_{k}{\sum_{l=1}^d  \left| (B^{2}) _{kl}\right| }$ are equal. Hence
	\begin{align*}
\left\langle  f,-2\sum_{k,l=1}^d  (B^{2}) _{kl} \tilde\partial_l H_{B}^{m-1}  \tilde\partial_k f\right\rangle 	\leq& \left\langle  f,2\Big( \max_{l}{\sum_{k=1}^d  \left| (B^{2}) _{kl}\right| } \Big) \sum_{k=1}^d  \tilde \partial_k H_{B}^{m-1}  \tilde\partial_k f\right\rangle \\ 
	\leq& \left\langle  f,2\|B^2\|_1\sum_{k=1}^d   \tilde\partial_k H_{B}^{m-1}  \tilde\partial_k f\right\rangle 
\end{align*}
 Putting everything together we get the following inequality for vectors, to be understood as an inequality in each entry:
$$\begin{bmatrix}
	X_m & Y_m
\end{bmatrix}\leq \begin{bmatrix}
X_{m-1} & Y_{m-1}
\end{bmatrix} \begin{bmatrix}H_{B} & 2\|B^2\|_1\\
2&H_{B}\end{bmatrix}.$$
If we apply this process repeatedly we end up with
$$\begin{bmatrix}
	X_m & Y_m
\end{bmatrix}\leq \begin{bmatrix}
X_{0} & Y_{0}
\end{bmatrix} \begin{bmatrix}H_{B} & 2\|B^2\|_1\\
2&H_{B}\end{bmatrix}^{m}.$$
 We define $Z_{m}=\frac{Y_{m}}{\sqrt{\|B^2\|_1}}$ which leads to the following recursive relation:
$$\begin{bmatrix}
	X_m & Z_m
\end{bmatrix}\leq \begin{bmatrix}
X_{0} & Z_{0}
\end{bmatrix} \begin{bmatrix}H_{B} & 2\sqrt{\|B^2\|_1}\\2\sqrt{\|B^2\|_1}&H_{B}\end{bmatrix}^{m}.$$
Diagonalizing this matrix, we get a new form of this recursive formula:

\begin{align*}
	\begin{bmatrix}
	X_m & Z_m
\end{bmatrix}\leq & \begin{bmatrix}
H_B & \frac{||B||_f^{2}}{\sqrt{\|B^2\|_1}}
\end{bmatrix} \begin{bmatrix}H_{B} & 2\sqrt{\|B^2\|_1}\\2\sqrt{\|B^2\|_1}&H_{B}\end{bmatrix}^{m}\\
=&\frac{1}{2}  \begin{bmatrix}
	H_B & \frac{||B||_f^{2}}{\sqrt{\|B^2\|_1}}
\end{bmatrix}S
\begin{bmatrix}
	H_{B}+2\sqrt{\|B^2\|_1}&0\\0& H_{B}-2\sqrt{\|B^2\|_1}
\end{bmatrix}^m S,
\end{align*}
where \(S=\begin{bmatrix}  1 & 1\\ 1 &-1
\end{bmatrix}.\)
This leads to 
\begin{equation*}
\begin{split}
2R(H_B^m)=2X_{m+1}\le
&\left(H_B-\frac{||B||_f^{2}}{\sqrt{\|B^2\|_1}}\right)
   \bigl(H_B-2\sqrt{\|B^2\|_1}\bigr)^{m}\\
&+\left(H_B+\frac{||B||_f^{2}}{\sqrt{\|B^2\|_1}}\right)
   \bigl(H_B+2\sqrt{\|B^2\|_1}\bigr)^{m}.\qedhere
\end{split}
\end{equation*}
\end{proof}
\noindent In the next lemma, we make use of the recursive identity~\eqref{boundonH} to derive bounds on \( R^m(\mathrm{Id}) \).
\begin{lemma}\label{Lemma: Bound on R^m(Id)}
	For every $m\in \mathbb{N}$, we have
\begin{equation}
     R^{m}(\operatorname{Id})\leq \frac{d^m}{2^m} ( H_B +\sqrt{\|B^2\|_1}) ( H_B +3\sqrt{\|B^2\|_1}) \cdots( H_B +(2m+1)\sqrt{\|B^2\|_1}).
\end{equation}
 In particular
 \begin{equation*}
 \begin{split}
\|R^{m}(\operatorname{Id})\mathbf{1}_{(-\infty,E]} (H_B)\|=&\max\{|R^m(\operatorname{Id})(t)|: t \in \sigma(H_B)\cup (-\infty,E] \}\\\leq& \Bigl( \frac{d}{2}(E+\sqrt{\|B^2\|_1}m)\Bigr) ^m. 
 \end{split}
 \end{equation*}
\end{lemma}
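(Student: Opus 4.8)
The strategy is a double induction. The operator identity \(R^{m}(\mathrm{Id})\) is built by iterating \(R\), and from the proof of the preceding Proposition we have the crucial recursion \(2R(H_B^k)=2X_{k+1}\le(\cdots)\) expressing \(R\) applied to a power of \(H_B\) as an explicit polynomial in \(H_B\). The idea is therefore to show by induction on \(m\) that \(R^{m}(\mathrm{Id})\) is bounded above (in the quadratic-form sense) by a polynomial \(P_m(H_B)\) with \emph{nonnegative coefficients}, and then to control \(R(P_m(H_B))\) via linearity of \(R\) and the recursion applied term by term.

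First I would record the one-step estimate for \(R\) acting on a single monomial: from~\eqref{boundonH}, after expanding the two binomials \((H_B\mp2\sqrt{\|B^2\|_1})^k\) and regrouping, one obtains \(R(H_B^k)\le Q_k(H_B)\) where \(Q_k\) is a polynomial of degree \(k+1\) with nonnegative coefficients. The cleanest bookkeeping is to weaken this slightly to the clean bound \(R(H_B^k)\le \frac{d}{2}\,H_B(H_B+(2k+1)\sqrt{\|B^2\|_1})\cdots\) — more precisely one checks that the right-hand side of~\eqref{boundonH}, divided by \(2\), is dominated by \(\frac{d}{2}(H_B+\sqrt{\|B^2\|_1})(H_B+3\sqrt{\|B^2\|_1})\cdots(H_B+(2k+1)\sqrt{\|B^2\|_1})\) wait—one must be careful, since \(R\) raises degree by one, so acting on \(H_B^k\) should produce degree \(k+1\); the right telescoping identity to aim for is
\[
R\bigl((H_B+3\sqrt{\|B^2\|_1})\cdots(H_B+(2m+1)\sqrt{\|B^2\|_1})\bigr)
\le \tfrac{d}{2}(H_B+\sqrt{\|B^2\|_1})\cdots(H_B+(2m+1)\sqrt{\|B^2\|_1}),
\]
so that the product on the right, with one extra factor \((H_B+\sqrt{\|B^2\|_1})\) prepended, is reproduced with the prefactor \(d/2\). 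Then the claimed bound \(R^m(\mathrm{Id})\le (d/2)^m\prod_{j=1}^m(H_B+(2j+1)\sqrt{\|B^2\|_1})\cdot(\text{shift})\) follows immediately by iterating \(m\) times starting from \(R^0(\mathrm{Id})=\mathrm{Id}\). \textbf{Note:} the exact indexing of the product (whether it starts at \(\sqrt{\|B^2\|_1}\) or \(3\sqrt{\|B^2\|_1}\)) has to be matched carefully against the statement; the substance is the telescoping mechanism, not the shift.

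The main obstacle is that \(R\) is an operator acting on the \emph{algebra of polynomials in the \(\tilde\partial_k\)}, not a scalar operation, and the bound~\eqref{boundonH} is only valid for \(R\) applied to \emph{powers of \(H_B\)}, i.e. to the specific central elements \(H_B^k\). One therefore cannot naively apply \(R\) to a polynomial in \(H_B\) and expect the quadratic-form inequality to pass through, because \(R\) is not monotone in an obvious way on arbitrary algebra elements. The resolution is that \(P_m(H_B)\) is a polynomial in \(H_B\) with \emph{nonnegative} coefficients, and \(R\) is linear and \emph{positivity-preserving} in the sense that \(P\le P'\) (as quadratic forms, with \(P,P'\) polynomials in \(H_B\)) implies \(R(P)\le R(P')\) whenever the difference has a spectral representation with nonnegative weight — this needs to be verified, and is where the nonnegativity of the coefficients is essential. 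Once this monotonicity/linearity of \(R\) on the positive cone of \(\R[H_B]\) is in hand, the induction closes.

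Finally, for the ``in particular'' part: \(R^m(\mathrm{Id})\) is now bounded by an explicit polynomial \(p_m\) in \(H_B\) with nonnegative coefficients, so \(\|R^m(\mathrm{Id})\mathbf{1}_{(-\infty,E]}(H_B)\|\le \sup_{t\in\sigma(H_B)\cap(-\infty,E]}p_m(t)\le p_m(E)\) since \(p_m\) is increasing on \([0,\infty)\) and \(\sigma(H_B)\subset[0,\infty)\). Then \(p_m(E)=(d/2)^m\prod_{j=1}^m(E+(2j+1)\sqrt{\|B^2\|_1})\le (d/2)^m\prod_{j=1}^m(E+(2m+1)\sqrt{\|B^2\|_1})\)—but the stated bound is \((\tfrac{d}{2}(E+\sqrt{\|B^2\|_1}m))^m\), so one actually wants the slightly different estimate \(\prod_{j=1}^m(E+(2j+1)\sqrt{\|B^2\|_1})\le (E+\sqrt{\|B^2\|_1}m)^m\) up to adjusting the odd-number normalization, which follows by AM–GM or by crude term-by-term domination once the constants are lined up. This last step is routine arithmetic; the only care needed is consistency of the \(\sqrt{\|B^2\|_1}\)-coefficients between the polynomial bound and its scalar consequence.
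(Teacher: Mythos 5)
Your plan follows the paper's own proof closely: induction on $m$, applying~\eqref{boundonH} via the linearity of $R$ after expanding the inductive bound into monomials $H_B^j$, regrouping through the shift $H_B\mapsto H_B+2\sqrt{\|B^2\|_1}$, and finishing the ``in particular'' part with the spectral theorem together with an AM--GM-type estimate on the product. Two points you flag but leave open are, however, worth closing. Your worry about whether $R$ respects the quadratic-form inequality has a clean resolution that does not hinge on nonnegativity of the polynomial coefficients: $R$ is order-preserving on operators, because $R(A)=\sum_k\tilde\partial_k A\,\tilde\partial_k$ and each $\tilde\partial_k$ is formally self-adjoint, so $A\ge 0$ gives $\langle f,R(A)f\rangle=\sum_k\langle\tilde\partial_k f,A\,\tilde\partial_k f\rangle\ge 0$, and linearity then yields $A\le A'\Rightarrow R(A)\le R(A')$. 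The second point is the genuine gap: you never identify where the $\tfrac{d}{2}$ prefactor comes from. Applying~\eqref{boundonH} to the terms $D_{j,m}R(H_B^j)$ and using the polynomial shift produces a leading factor $\bigl(H_B+\tfrac{\|B\|_f^2}{\sqrt{\|B^2\|_1}}\bigr)$, not $\tfrac{d}{2}\bigl(H_B+\sqrt{\|B^2\|_1}\bigr)$; passing from one to the other requires the inequality $\|B\|_f^2\le\tfrac{d}{2}\|B^2\|_1$, which the paper derives from the identity $(B^2)_{\ell\ell}=-\sum_k|B_{\ell k}|^2$ (skew-symmetry of $B$) followed by summation over $\ell$. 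Without this ingredient the induction does not close with the stated constant, so ``one checks'' is hiding a real step rather than a routine verification. Finally, the indexing you worried about: the product in the paper's inductive hypothesis runs over $(H_B+(2k-1)\sqrt{\|B^2\|_1})$ for $k=1,\dots,m$ (so the AM--GM average is exactly $m$), and the $(2m+1)$ in the lemma as printed appears to be a typo.
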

\begin{proof}
We proceed by induction on \(m\), the exponent of \(R(\mathrm{Id})\). For the initial case, since $d\ge 2$ and $H_B$ is non-negative, we already have 
$$R(\mathrm{Id})=H_B\leq H_B +\sqrt{\|B^2\|_1}\le  \frac{d}{2}(H_B +\sqrt{\|B^2\|_1}).$$
Suppose that the inequality in the lemma holds for  $ m$:
\begin{equation*}
R^m (\operatorname{Id})\leq \frac{d^m}{2^m}( H_B +\sqrt{\|B^2\|_1}) ( H_B +3\sqrt{\|B^2\|_1}) \cdots( H_B +(2m-1)\sqrt{\|B^2\|_1} ) .
\end{equation*}
We will show that it holds for $m+1.$ First using the inductive hypothesis and the linearity of $R$, we have
\begin{align*}
	R(R^m (\operatorname{Id}))&\leq R \left( \frac{d^m}{2^m}( H_B +\sqrt{\|B^2\|_1} ) ( H_B +3\sqrt{\|B^2\|_1}) \cdots( H_B +(2m-1)\sqrt{\|B^2\|_1}) \right) \\
    &= \frac{d^m}{2^m}R \left( ( H_B +\sqrt{\|B^2\|_1} ) ( H_B +3\sqrt{\|B^2\|_1}) \cdots( H_B +(2m-1)\sqrt{\|B^2\|_1}) \right).
\end{align*}
Expanding the polynomial inside \(R\), we have
\begin{equation}\label{eqn:expansionofR^m}
    \bigl(H_B + \sqrt{\|B^2\|_1}\bigr) \bigl(H_B + 3\sqrt{\|B^2\|_1}\bigr) \cdots \bigl(H_B + (2m - 1)\sqrt{\|B^2\|_1}\bigr) = \sum_{j=0}^m D_{j,m} H_B^j,
\end{equation}
where \(D_{j,m}\) denote the coefficients. Using the linearity of \(R\), it follows that
\[
R\left(R^m(\mathrm{Id})\right) \leq\frac{d^m}{2^m} R\left(\sum_{j=0}^m D_{j,m} H_B^j\right) =\frac{d^m}{2^m}  \sum_{j=0}^m D_{j,m} R(H_B^j).
\]
Next, applying the estimate \eqref{boundonH} to \(R(H_B^j)\), and using that $H_B$ is non-negative, we get
\begin{align*}
R^{m+1}(\mathrm{Id}) &= R\bigl(R^m(\mathrm{Id})\bigr) \\
&\leq \frac{d^m}{2^m}  \sum_{j=0}^m D_{j,m} \frac{1}{2}\left( \left(H_B - \frac{||B||_f^{2}}{\sqrt{\|B^2\|_1}}\right) \bigl(H_B - 2\sqrt{\|B^2\|_1}\bigr)^j \right. \\
&\quad \left. +\left(H_B + \frac{||B||_f^{2}}{\sqrt{\|B^2\|_1}}\right) \bigl(H_B + 2\sqrt{\|B^2\|_1}\bigr)^j\right) \\
&\leq\frac{d^m}{2^m}  \left(H_B +\frac{||B||_f^{2}}{\sqrt{\|B^2\|_1}}\right) \sum_{j=0}^m D_{j,m} \bigl(H_B + 2\sqrt{\|B^2\|_1}\bigr)^j.
\end{align*}
Now the last operator is nothing but the operator $\sum_{j=0}^m D_{j,m}H_B^{j}$, given in \eqref{eqn:expansionofR^m},  computed at $H_B+2\sqrt{\|B^2\|_1}$:
\begin{equation}\label{eqn:iterative1}
    R^{m+1} (\operatorname{Id})\leq \frac{d^m}{2^m} \left( H_B +\frac{||B||_f^{2}}{\sqrt{\|B^2\|_1}}\right) ( H_B +3\sqrt{\|B^2\|_1}) \cdots( H_B +(2m+1)\sqrt{\|B^2\|_1}).
\end{equation}
As a final step, we will bound the $\|.\|_f$ norm of $B$ in terms of the 1-norm of $B^2$. The diagonal entries of $B^2$ can be written in the following way using the skew-symmetry of $B$:
\begin{align*}
(B^2)_{\ell \ell} &= \sum_{k=1}^d B_{\ell k} B_{k \ell} = \sum_{k=1}^d B_{\ell k} (-B_{\ell k}) = - \sum_{k=1}^d |B_{\ell k}|^2.
\end{align*}
Taking absolute value and summing over all $\ell = 1, \dots, d$:
\begin{equation*}
\sum_{\ell=1}^d |(B^2)_{\ell \ell}| = \sum_{\ell=1}^d \sum_{k=1}^d |B_{\ell k}|^2 = 2 \|B\|_f^2.
\end{equation*}
Since $|(B^2)_{\ell \ell}|$ is just one part of the sum $\|B^2\|_1$, for every $\ell$ we have
\begin{equation*}
|(B^2)_{\ell \ell}| \le \sum_{k=1}^d |(B^2)_{\ell k}| \le \|B^2\|_1.
\end{equation*}
Substituting:
\begin{align*}
2 \|B\|_f^2 &= \sum_{\ell=1}^d |(B^2)_{\ell \ell}| \le \sum_{\ell=1}^d \|B^2\|_1 = d \cdot \|B^2\|_1
\Rightarrow \|B\|_f^2 \le \frac{d}{2} \|B^2\|_1.
\end{align*}
Then \eqref{eqn:iterative1} becomes
\begin{equation}
    R^{m+1}(\operatorname{Id})\leq \frac{d^m}{2^m}  \Bigl( H_B +\frac{d}{2}\sqrt{\|B^2\|_1}\Bigr) ( H_B +3\sqrt{\|B^2\|_1}) \cdots( H_B +(2m+1)\sqrt{\|B^2\|_1}).
\end{equation}
Finally, the non-negativity of $H_B$ gives
\begin{equation}
    R^{m+1}(\operatorname{Id})\leq \frac{d^{m+1}}{2^{m+1}} ( H_B +\sqrt{\|B^2\|_1}) ( H_B +3\sqrt{\|B^2\|_1}) \cdots( H_B +(2m+1)\sqrt{\|B^2\|_1}),
\end{equation}
which completes the inductive step. Then the result follows.
\end{proof}

	\begin{theorem}\label{MBI}
	For every $E>0$ and $m \in \mathbb{N}$, we have the magnetic Bernstein inequality
	\begin{equation}
		\sum_{\alpha\in \{1,\cdots, d\}^{m}}\| \tilde{\partial}_{\alpha_{1}}\tilde{\partial}_{\alpha_{2}}\cdots\tilde{\partial}_{\alpha_{m}}f\|^{2}_{L^{2}(\R^{d})} \leq C_B(m)\| f\| ^{2}_{L^{2}(\R^{d})}, \ \ \forall  f \in \operatorname{Ran} \mathbf{1}_{(-\infty,E]}(H_{B})
	\end{equation}
	where $C_B(m)=\Bigl( \frac{d}{2}(E+\sqrt{\|B^2\|_1}m)\Bigr)^{m}$.
\end{theorem}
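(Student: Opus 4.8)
The statement is now a short assembly of the ingredients proved above, so the plan is essentially mechanical. First I would invoke Lemma~\ref{Lemma: Ran and inf mag diff} to record that every $f\in\operatorname{Ran}\mathbf{1}_{(-\infty,E]}(H_B)$ lies in $W_B^{\infty,2}(\R^d)$; this makes all the iterated magnetic derivatives $\tilde\partial_{\alpha_1}\cdots\tilde\partial_{\alpha_m}f$ well-defined elements of $L^2(\R^d)$ and legitimises the repeated integration by parts behind~\eqref{Equation: MBI pre}, so that
\[
\sum_{\alpha\in\{1,\dots,d\}^m}\bigl\|\tilde\partial_{\alpha_1}\cdots\tilde\partial_{\alpha_m}f\bigr\|_{L^2(\R^d)}^2=\bigl\langle f,R^m(\mathrm{Id})f\bigr\rangle .
\]

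Next I would feed in Lemma~\ref{Lemma: Bound on R^m(Id)}. Writing $F_m(H_B)$ for the polynomial in $H_B$ appearing there, the quadratic-form inequality $R^m(\mathrm{Id})\le F_m(H_B)$ applied to $f$ gives $\langle f,R^m(\mathrm{Id})f\rangle\le\langle f,F_m(H_B)f\rangle$. Since $f$ lies in the range of $\mathbf{1}_{(-\infty,E]}(H_B)$, its scalar spectral measure for $H_B$ is supported in $\sigma(H_B)\cap(-\infty,E]$, hence $f$ is in the domain of $F_m(H_B)$ and, by the spectral theorem, $\langle f,F_m(H_B)f\rangle=\int_{\sigma(H_B)}F_m(\lambda)\,\mathrm{d}\langle f,E_\lambda f\rangle\le\bigl(\sup_{\lambda\in\sigma(H_B)\cap(-\infty,E]}F_m(\lambda)\bigr)\|f\|_{L^2}^2$. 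The supremum is exactly the quantity estimated in the second conclusion of Lemma~\ref{Lemma: Bound on R^m(Id)}, namely $\bigl(\tfrac d2(E+\sqrt{\|B^2\|_1}\,m)\bigr)^m=C_B(m)$ (the arithmetic--geometric mean inequality together with $\sum_{k=1}^m(2k-1)=m^2$). Chaining the three displays yields the claimed bound.

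The heavy lifting --- the recursion for $X_m$ and $Y_m$, the diagonalization of the associated transfer matrix, and the inductive polynomial bound on $R^m(\mathrm{Id})$ --- has already been carried out, so no genuine obstacle remains. The one point that warrants a sentence of care is the move from the form inequality $R^m(\mathrm{Id})\le F_m(H_B)$ on $W_B^{\infty,2}(\R^d)$ to a numerical bound on the spectral subspace: this is permissible precisely because $f$ simultaneously belongs to $W_B^{\infty,2}(\R^d)$ (Lemma~\ref{Lemma: Ran and inf mag diff}) and has compactly supported spectral measure for $H_B$, so that the functional calculus may be applied and the supremum restricted to $(-\infty,E]$.
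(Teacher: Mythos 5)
Your proposal is correct and follows the paper's own proof essentially verbatim: regularity via Lemma~\ref{Lemma: Ran and inf mag diff}, the integration-by-parts identity~\eqref{Equation: MBI pre}, and then the quadratic-form bound of Lemma~\ref{Lemma: Bound on R^m(Id)} evaluated on the spectral subspace. The extra sentence you add about the functional calculus and the AM--GM step merely unpacks what the paper's second conclusion of Lemma~\ref{Lemma: Bound on R^m(Id)} already encapsulates.
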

\begin{proof}
Let \( f \in \operatorname{Ran} \mathbf{1}_{(-\infty,E]}(H_{B}) \). By Lemma \ref{Lemma: Ran and inf mag diff}, we have \( f \in W^{\infty,2}(\R^d) \), which implies that the sum
\[
\sum_{\alpha \in \{1, \ldots, d\}^m} \| \tilde{\partial}_{\alpha_1} \tilde{\partial}_{\alpha_2} \cdots \tilde{\partial}_{\alpha_m} f \|_{L^2(\R^d)}^2
\]
is finite. 
Thus, we can perform integration by parts for the magnetic derivatives. Since \( R^m(\mathrm{Id}) \) is the sum of symmetric polynomials of degree \( m \), we obtain
\begin{align*}
\langle f, R^m(\mathrm{Id}) f \rangle 
&= \sum_{\alpha \in \{1, \ldots, d\}^m} \big\langle f, \tilde{\partial}_{\alpha_m} \tilde{\partial}_{\alpha_{m-1}} \cdots \tilde{\partial}_{\alpha_1} \tilde{\partial}_{\alpha_1} \tilde{\partial}_{\alpha_2} \cdots \tilde{\partial}_{\alpha_m} f \big\rangle \\
&= \sum_{\alpha \in \{1, \ldots, d\}^m} \| \tilde{\partial}_{\alpha_1} \tilde{\partial}_{\alpha_2} \cdots \tilde{\partial}_{\alpha_m} f \|_{L^2(\R^d)}^2.
\end{align*}
Applying Lemma \ref{Lemma: Bound on R^m(Id)}, it follows that
\[
\sum_{\alpha \in \{1, \ldots, d\}^m} \| \tilde{\partial}_{\alpha_1} \tilde{\partial}_{\alpha_2} \cdots \tilde{\partial}_{\alpha_m} f \|_{L^2(\R^d)}^2 \leq \Bigl( \frac{d}{2}(E+\sqrt{\|B^2\|_1}m)\Bigr) ^{m} \|f\|_{L^2(\R^d)}^2. \qedhere
\]
\end{proof}
\begin{remark}\label{rem:magnetic-derivatives-necessary}
In the context of Theorem~\ref{MBI}, it is essential to work with \emph{magnetic derivatives}
\(\tilde{\partial}_j\) rather than classical derivatives \(\partial_j\).
Functions in the spectral subspace \(\mathrm{Ran}\,\mathbf \mathbf{1}_{(-\infty,E]}(H_B)\) may have
\emph{unbounded} classical gradients in \(L^2(\R^d)\), while their magnetic derivatives
remain uniformly bounded.

For simplicity, assume that the dimension \(d = 2m\) is even so that the constant magnetic
field matrix \(B\) has full rank.  
The lowest Landau level is then an isolated eigenvalue with a normalized eigenfunction
\(\psi \in \ker(H_B - E_0)\).
For \(y \in \R^d\), define the \emph{magnetic translate}
\begin{equation}\label{eq:mag-trans}
f_y(x) := e^{i \chi(x,y)} \psi(x - y),
\qquad
\chi(x,y) := \tfrac12 x^\top B y.
\end{equation}
It is well known (see, e.g., \cite{Brown-64}) that the operator
\[
(T_y \varphi)(x) := e^{i \chi(x,y)} \varphi(x-y)
\]
is unitary on \(L^2(\R^d)\) and commutes with \(H_B\).
Therefore \(f_y \in \mathrm{Ran}\,\mathbf \mathbf{1}_{(-\infty,E]}(H_B)\) whenever \(\psi\) is.
Differentiating \eqref{eq:mag-trans} with respect to \(x\) gives
\[
\nabla f_y(x)
 = i \left[ \nabla_x \chi(x,y) \right] f_y(x)
   + e^{i\chi(x,y)} (\nabla \psi)(x - y).
\]
Since
\(
\nabla_x \chi(x,y) = \tfrac12 B y,
\)
we have
\[
\nabla f_y(x)
 = i \left( \tfrac12 B y \right) f_y(x)
   + e^{i\chi(x,y)} (\nabla \psi)(x - y).
\]
Taking \(L^2\)-norms and using the triangle inequality,
\[
\|\nabla f_y\|_{L^2}
 \ge \bigl\| i \tfrac12 B y \, f_y \bigr\|_{L^2}
   - \|\nabla \psi\|_{L^2}
 = \tfrac12 |B y| - \|\nabla \psi\|_{L^2}.
\]
Thus, as \(|y| \to \infty\), the classical gradient norm grows linearly with \(|y|\).

\noindent Applying the $j$-th magnetic derivative \(\tilde{\partial}_j\) to \(f_y\) yields
\begin{align*}
\tilde{\partial}_j f_y(x)
 &= i \left[ i \left( \tfrac12 B y \right) f_y(x)
        + e^{i\chi(x,y)} (\partial_j \psi)(x - y) \right]
    + A_j(x) f_y(x) \\
 &= -\tfrac12 (B y)_j f_y(x)
    + e^{i\chi(x,y)} i (\partial_j \psi)(x - y)
    + \tfrac12 (B x)_j f_y(x).
\end{align*}
Because \(-\tfrac12 B y + \tfrac12 B x = \tfrac12 B (x - y)\),
\[
\tilde{\partial}_j f_y(x)
  = e^{i\chi(x,y)} \bigl[ i \partial_j \psi + A_j(x-y)\psi \bigr](x - y)
  = e^{i\chi(x,y)} (\tilde{\partial}_j \psi)(x - y).
\]
Since magnetic translation is unitary,
\(
\|\tilde{\partial}_j f_y\|_{L^2}
  = \|\tilde{\partial}_j \psi\|_{L^2},
  j=1,\dots,d,
\)
uniformly in \(y\).
\end{remark}

\begin{theorem}[Necessity of Thickness for Higher Dimensions]\label{theorem:necessityofthickness}
Let $S\subset\mathbb R^d$ be measurable and let $E>E_0:=\inf\sigma(H_B)$. 
Suppose there exists $C>0$ such that
\[
\|f\|_{L^2(\mathbb R^d)}\le C\,\|f\|_{L^2(S)}\qquad\text{for all }f\in\operatorname{Ran}\mathbf \mathbf{1}_{(-\infty,E]}(H_B).
\]
Then $S$ is thick.
\end{theorem}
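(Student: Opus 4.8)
I would argue by contraposition: supposing $S$ is \emph{not} thick, the plan is to exhibit normalized functions $f_n\in\operatorname{Ran}\mathbf{1}_{(-\infty,E]}(H_B)$ with $\|f_n\|_{L^2(S)}\to0$, which contradicts the assumed estimate as soon as $n$ is large. This follows the two-dimensional strategy of \cite{PfeifferT-25}, based on magnetic translates of a ground state; the new ingredient in dimension $d\ge3$ is that one must first produce an admissible (i.e.\ energy $\le E$) bounded $L^2$ state, which in the degenerate case $\operatorname{rank}B<d$ --- where $E_0$ need not be an eigenvalue --- forces us to use the spectral decomposition of Remark~\ref{remark: decomposition} together with the \emph{strict} inequality $E>E_0$.

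\emph{Construction of the test state.} Using the normal form $\mathcal C=U^\top B U$ and the unitary $\mathcal U$ of Remark~\ref{remark: decomposition}, $H_B$ is unitarily equivalent to $\sum_{j=1}^m H^{(2)}_{C_j}\otimes I+I\otimes(-\Delta_{\R^{d-2m}})$. I would take $\psi_0:=\bigotimes_{j=1}^m\phi_j$ with $\phi_j$ a Gaussian lowest-Landau-level eigenfunction of $H^{(2)}_{C_j}$, and $g\in L^2(\R^{d-2m})$ with $\widehat g\in C_c^\infty$ supported in the non-empty ball $\{|\xi|<\sqrt{E-E_0}\}$ (this second factor is absent when $d=2m$). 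Then $\psi_0\otimes g$ lies in the spectral subspace of energy $\le E_0+(E-E_0)=E$ of the transformed operator and is a Schwartz function; transporting it back by $\mathcal U$ (a composition with an orthogonal change of variables, hence an isometry of both $L^2$ and $L^\infty$) and normalizing yields $\Psi\in\operatorname{Ran}\mathbf{1}_{(-\infty,E]}(H_B)$ with $\|\Psi\|_{L^2(\R^d)}=1$ and $\|\Psi\|_{L^\infty}<\infty$.

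\emph{Magnetic translates and the estimate.} For $y\in\R^d$ let $T_y$ be the magnetic translation of Remark~\ref{rem:magnetic-derivatives-necessary}: it is unitary on $L^2(\R^d)$, commutes with $H_B$ (see \cite{Brown-64}), and satisfies $|(T_y\Psi)(x)|=|\Psi(x-y)|$ almost everywhere. Hence $f_y:=T_y\Psi$ is normalized, lies in $\operatorname{Ran}\mathbf{1}_{(-\infty,E]}(H_B)$, and $\|f_y\|_{L^2(S)}^2=\int_{S-y}|\Psi|^2$. This is where non-thickness enters quantitatively: for each $n$, since $S$ is \emph{not} $\bigl((n,\dots,n),\,n^{-(d+1)}\bigr)$-thick, there is a cube $Q_n$ of side length $n$, centred at some $y_n$, with $\operatorname{Vol}(S\cap Q_n)<n^{-(d+1)}\cdot n^d=1/n$, a bound that tends to $0$. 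Splitting $S=(S\cap Q_n)\cup(S\setminus Q_n)$ and translating by $y_n$,
\[
\|f_{y_n}\|_{L^2(S)}^2
=\int_{(S\cap Q_n)-y_n}\!|\Psi|^2+\int_{(S\setminus Q_n)-y_n}\!|\Psi|^2
\le \frac{\|\Psi\|_{L^\infty}^2}{n}+\int_{\R^d\setminus[-n/2,\,n/2]^d}\!|\Psi|^2 ,
\]
because $(S\cap Q_n)-y_n$ has measure $<1/n$ while $(S\setminus Q_n)-y_n\subseteq\R^d\setminus(Q_n-y_n)=\R^d\setminus[-n/2,n/2]^d$. Both terms tend to $0$ as $n\to\infty$ (the second since $\Psi\in L^2$), so $\|f_{y_n}\|_{L^2(S)}\to0$ while $\|f_{y_n}\|_{L^2(\R^d)}=1$, contradicting the hypothesis. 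Hence $S$ must be thick.

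\emph{Main obstacle.} The two genuine points hidden behind the routine steps are: (i) the construction of $\Psi$ --- a truly bounded admissible state must be exhibited, and when $B$ is not of full rank it cannot be an eigenfunction and has to borrow energy from the free-Laplacian directions, which is exactly where $E>E_0$ is used and why $E=E_0$ is excluded; and (ii) the calibration $\rho=n^{-(d+1)}$ in the scale-$n$ use of non-thickness: failure of thickness at a \emph{fixed} density $\rho$ would only give $\operatorname{Vol}(S\cap Q_n)<\rho\,n^d$, which blows up and is useless against $|\Psi|^2$, whereas letting $\rho$ shrink with the scale makes the local $L^2$-mass of $\Psi$ on $S\cap Q_n$ vanish. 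Everything else reduces to absolute continuity of the Lebesgue integral and unitarity of magnetic translations.
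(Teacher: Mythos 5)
Your proof is correct and follows essentially the same route as the paper's: magnetic translates of a bounded admissible state detect arbitrarily large near-holes of a non-thick set, contradicting the assumed inequality. The core mechanism (unitarity of $T_y$, $|T_y\Psi|=|\Psi(\cdot-y)|$, splitting the integral into a small-measure part controlled by $\|\Psi\|_\infty$ and a tail controlled by $\Psi\in L^2$) is identical. Your diagonal calibration $\rho_n=n^{-(d+1)}$ together with $\ell_n=n$ is a minor stylistic variant of the paper's ``fix $\ell$ so that the Gaussian tail outside an $\ell$-cube is $<\tfrac1{4C^2}$, then choose $\rho$ small for that $\ell$''; both exploit non-thickness in the same way, and neither is more or less correct.

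Where you do better is the degenerate case $\operatorname{rank}B<d$. The paper disposes of it with the remark that ``the general case follows by multiplying a Gaussian factor in the null directions of $B$,'' but as written this does not work: a Gaussian $g$ on $\R^{d-2m}$ has $\widehat g$ with unbounded support, so $\psi_0\otimes g$ is not contained in any spectral subspace $\operatorname{Ran}\mathbf{1}_{(-\infty,E]}$ of the transformed operator $\sum_j H^{(2)}_{C_j}\otimes I+I\otimes(-\Delta)$. Your construction replaces the Gaussian by a Paley--Wiener function $g$ with $\widehat g\in C_c^\infty$ supported in $\{|\xi|<\sqrt{E-E_0}\}$; this does land in the correct spectral subspace, $g$ is still bounded and in $L^2$, and it makes explicit where the hypothesis $E>E_0$ is actually used -- a point the paper's full-rank argument never touches (there $E_0$ is an eigenvalue, so $E\ge E_0$ would already suffice). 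This is a genuine clarification of the statement's hypothesis, not just a cosmetic difference.

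One small caveat on your final remark (ii): the paper does not rely on letting $\rho$ shrink \emph{with} the scale. It fixes $\ell$ once (using $\Psi\in L^2$, not necessarily Gaussian decay) and then picks a single small $\rho$ for that $\ell$; the statement ``for every $\ell$ and $\rho$ there is a bad cube'' is applied only once. Your sequential formulation is also fine, but the ``obstacle'' you describe is not one the paper actually has to circumvent.
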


\begin{proof}
We argue by contradiction. Suppose that $S\subset \R^d$ is \emph{not thick}. Then
\[
\forall \ell>0,\ \forall \rho>0,\ \exists y\in\R^d \text{ such that } |S\cap Q_\ell(y)|<\rho\ell^d.
\]
Without loss of generality, assume $B$ is full rank (so $d=2m$); the general case follows by multiplying a Gaussian factor in the null directions of $B$. 
Let $\Psi_0\in \ker(H_B - E_0)$ be a normalized ground state, which is a Gaussian:
\begin{equation}\label{eqn:normalized_ground_state}
    \Psi_0(x) = \prod_{j=1}^m K_j \exp({-C_j(x_{2j-1}^2+x_{2j}^2)/2)},
\end{equation}
 We have $\|\Psi_0\|_{L^2(\R^d)}=1$ and there exist constants $A,\alpha>0$ (depending on $B$) such that
\(
|\Psi_0(x)| \leq A e^{-\alpha\lvert x \rvert^2} \text{ for all } x\in\R^d.
\)
For $y\in\R^d$, as in Remark \ref{rem:magnetic-derivatives-necessary}, the \emph{magnetic translate} is
\[
f_y(x) := T_y \Psi_0(x) = e^{i\chi(x,y)} \Psi_0(x-y),
\qquad \chi(x,y) := \frac12 x^\top B y.
\]
By unitarity of $T_y$ and commutation with $H_B$, we have
\[
f_y \in \mathrm{Ran}\,\mathbf{1}_{(-\infty,E]}(H_B), \qquad \|f_y\|_{L^2(\R^d)} = \|\Psi_0\|_{L^2} = 1.
\]
Since $\Psi_0$ decays as a Gaussian, for any $\varepsilon>0$ we can choose $\ell_\varepsilon>0$ such that for all $\ell\geq \ell_\varepsilon$ and all $y\in\R^d$,
\[
\int_{\R^d\setminus Q_\ell(y)} |f_y(x)|^2\,dx = \int_{\R^d\setminus Q_\ell(0)} |\Psi_0(x)|^2\,dx \leq \varepsilon.
\]
Fix $\varepsilon := \frac{1}{4C^2}$, where $C$ is the constant from the spectral inequality. Choose $\ell := \ell_\varepsilon$.
Since $S$ is not thick, for the $\ell$ chosen above and for $\rho>0$ , there exists $y\in\R^d$ such that
\(
|S\cap Q_\ell(y)| < \rho\ell^d.
\)
We decompose:
\begin{align*}
\|f_y\|_{L^2(S)}^2 
&\leq \int_{S\cap Q_\ell(y)} |f_y(x)|^2\,dx + \int_{\R^d\setminus Q_\ell(y)} |f_y(x)|^2\,dx.
\end{align*}
For the first term, note that $\|f_y\|_{L^\infty} = \|\Psi_0\|_{L^\infty} =: M$ (a constant depending only on $B$). Therefore,
\[
\int_{S\cap Q_\ell(y)} |f_y(x)|^2\,dx \leq M^2 |S\cap Q_\ell(y)| < M^2\rho\ell^d.
\]
For the second term, by our choice of $\ell$, we have
\[
\int_{\R^d\setminus Q_\ell(y)} |f_y(x)|^2\,dx \leq \varepsilon = \frac{1}{4C^2}.
\]
Thus,
\[
\|f_y\|_{L^2(S)}^2 < M^2\rho\ell^d + \frac{1}{4C^2}.
\]
Now choose $\rho$ so small that
\(
M^2\rho\ell^d < \frac{1}{4C^2}.
\)
With this choice,
\[
\|f_y\|_{L^2(S)}^2 < \frac{1}{4C^2} + \frac{1}{4C^2} = \frac{1}{2C^2}.
\]
Since $f_y\in\mathrm{Ran}\,\mathbf{1}_{(-\infty,E]}(H_B)$, the spectral inequality gives
\[
1 = \|f_y\|_{L^2(\R^d)} \leq C\|f_y\|_{L^2(S)} < C\cdot\frac{1}{\sqrt{2}C} = \frac{1}{\sqrt{2}} < 1,
\]
which is a contradiction.
\end{proof}

\noindent We write \(\beta \preceq \alpha\) if \(\beta\) is a subsequence of \(\alpha\), and denote by \(\alpha \setminus \beta\) the complementary subsequence of \(\beta\) in \(\alpha\).
We can now state the next theorem, which provides Bernstein-type inequalities (involving ordinary derivatives) for \( |f|^2 \), where \( f \in \operatorname{Ran}\mathbf{1}_{(-\infty, E]}(H_B) \).
\begin{theorem}
Let \( E \geq 0 \), \( m \in \mathbb{N} \), and let \( f \in \operatorname{Ran}   {1}_{(-\infty,E]}(H_{B}) \). Then the following holds:
\begin{equation}\label{est1}
\sum_{\alpha \in \{1, \dots, d\}^{m}} \| \partial^{\alpha} |f|^{2} \|_{L^{1}(\R^{d})} \leq C'_{B}(m) \lvert f \rvert^{2}_{L^{2}(\R^{d})},
\end{equation}
where the constant \( C'_{B}(m) \) is
\[
C'_{B}(m) = d^{\frac{dm}{2}} \Bigl( \frac{d}{2}(E+\sqrt{\|B^2\|_1}m)\Bigr)^{\frac{m}{2}}.
\]
Moreover, we have the pointwise estimate:
\begin{equation}\label{est2}
\sum_{\alpha \in \{1, \dots, d\}^{m}} \| \partial^{\alpha} |f|^{2} \|_{L^{\infty}(\R^{d})} \leq C_{\text{sob},d} \sum_{m'=m}^{m+d+1} C'_{B}(m') \lvert f \rvert^{2}_{L^{2}(\R^{d})},
\end{equation}
where \( C_{\text{sob},d} \) is a universal constant depending only on dimension \( d \).
\end{theorem}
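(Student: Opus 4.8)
The goal is to upgrade the magnetic Bernstein inequality of Theorem~\ref{MBI} (which controls $L^2$-norms of magnetic derivatives of $f$) into a Bernstein-type estimate for the \emph{ordinary} derivatives of $|f|^2$, in $L^1$ and then in $L^\infty$. The plan is to first observe that an ordinary derivative $\partial^\alpha(|f|^2) = \partial^\alpha(f\bar f)$ expands, via the Leibniz rule, into a sum over subsequences $\beta \preceq \alpha$ of terms $\partial^\beta f \, \overline{\partial^{\alpha\setminus\beta} f}$. The next, crucial algebraic step is to replace ordinary derivatives $\partial_k$ by magnetic derivatives $\tilde\partial_k = i\partial_k + A_k(x)$: one has $\partial_k = -i(\tilde\partial_k - A_k(x))$, so $\partial^\beta f$ can be rewritten as a linear combination of terms of the form (polynomial in $x$)\,$\times$\,(magnetic derivative string applied to $f$). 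However, the polynomial weights in $x$ are exactly cancelled when one forms the product $\partial^\beta f\,\overline{\partial^{\alpha\setminus\beta}f}$ against the decaying function — more precisely, the cleanest route is: the $x$-dependent terms all come from the single vector potential $A(x)=\tfrac12 Bx$, and when one integrates $|\partial^\alpha(|f|^2)|$ one can absorb these weights using the fact (from Lemma~\ref{Lemma: Ran and inf mag diff} and the Gaussian-type decay implicit in the spectral subspace) that $x^\gamma f \in L^2$. A cleaner bookkeeping device, and the one I would actually use, is to note that $\partial^\alpha(f\bar g) $ for $f,g$ in the spectral subspace can be written purely in terms of magnetic derivatives of $f$ and $g$ because the $A(x)$ contributions telescope: writing $D_k := \partial_k$, one checks inductively that $D^\alpha(f\bar g)$ is a finite sum of products $(\tilde\partial^\beta f)\overline{(\tilde\partial^{\gamma} g)}$ with $|\beta|+|\gamma| = |\alpha|$ and bounded integer coefficients, with \emph{no} residual $x$-weights, since the conjugation turns $A_k$ real contributions into cancelling phases — this is the higher-dimensional analogue of the two-dimensional computation in \cite{PfeifferT-25}.

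Granting the Leibniz-type identity $D^\alpha(|f|^2) = \sum c_{\beta,\gamma}\,(\tilde\partial^\beta f)\overline{(\tilde\partial^\gamma f)}$ with $|\beta|+|\gamma|=m$ and $|c_{\beta,\gamma}| \le 1$ (up to a combinatorial factor absorbed in $C_{\mathrm{sob},d}$ or in the $d^{dm/2}$ prefactor), the $L^1$ bound \eqref{est1} follows by the triangle inequality and Cauchy--Schwarz in $L^2$: each term contributes $\|\tilde\partial^\beta f\|_{L^2}\|\tilde\partial^\gamma f\|_{L^2}$, and summing over $\alpha$ and over $\beta,\gamma$ one bounds this by a product of two sums to which Theorem~\ref{MBI} applies, giving $\big(\tfrac d2(E+\sqrt{\|B^2\|_1}\,m)\big)^{m/2}$ after taking square roots (the two factors are at orders $|\beta|$ and $|\gamma|$ summing to $m$, and one uses monotonicity of $C_B(k)^{1/2}$ in $k$ together with a crude counting of the number of splittings, which produces the $d^{dm/2}$-type factor). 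The second estimate \eqref{est2} is then a Sobolev embedding step: to control $\|\partial^\alpha|f|^2\|_{L^\infty}$ one uses that $W^{d+1,1}(\R^d) \hookrightarrow L^\infty(\R^d)$ (or $W^{k,1}\hookrightarrow C^0$ for $k>d$), so $\|\partial^\alpha|f|^2\|_{L^\infty} \lesssim_d \sum_{m'=m}^{m+d+1}\sum_{\alpha'\in\{1,\dots,d\}^{m'}}\|\partial^{\alpha'}|f|^2\|_{L^1}$, and each term on the right is estimated by \eqref{est1} at order $m'$, yielding precisely the stated sum $\sum_{m'=m}^{m+d+1} C_B'(m')$.

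The main obstacle I anticipate is the clean derivation of the Leibniz-type identity with \emph{no residual polynomial weights in $x$} — the naive expansion of $\partial^\alpha(f\bar f)$ in magnetic derivatives does produce terms with factors of $A_k(x) = \tfrac12(Bx)_k$, and one must verify that, because $\overline{A_k} = A_k$ is real and the magnetic derivative on $f$ versus the conjugated magnetic derivative on $\bar f$ enter with opposite signs of $i$, all such weighted terms cancel in pairs. Making this cancellation rigorous and uniform in the multi-index $\alpha$ (e.g. by an induction on $|\alpha|$, peeling off one derivative at a time and tracking that the identity $\partial_k(f\bar g) = -i(\tilde\partial_k f)\bar g + \overline{-i(\tilde\partial_k g)}\,f$ — where the $A_k$ terms are $-iA_k f\bar g$ and $+iA_k f\bar g$ — already shows the cancellation at the first level, and persists) is the technical heart of the proof. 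Once this structural identity is in place, the remaining steps are routine applications of Cauchy--Schwarz, Theorem~\ref{MBI}, and Sobolev embedding, with all constants explicit and combinatorial factors absorbed into $d^{dm/2}$ and $C_{\mathrm{sob},d}$.
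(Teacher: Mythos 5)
Your proposal is essentially the same as the paper's proof: the telescoping identity you isolate at first order, $\partial_k(u\bar v) = -i\,\bar v\,\tilde\partial_k u + i\,u\,\overline{\tilde\partial_k v}$, extends by induction to $i^{m}\partial^{\alpha}|f|^{2} = \sum_{\beta\preceq\alpha}(-1)^{m-|\beta|}\,\tilde\partial^{\beta}f\,\overline{\tilde\partial^{\alpha\setminus\beta}f}$ with no residual $x$-weights, which is precisely what the paper establishes before applying Cauchy--Schwarz, Theorem~\ref{MBI}, and the Sobolev embedding $W^{d+1,1}(\R^{d})\hookrightarrow L^{\infty}(\R^{d})$ exactly as you outline. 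The initial detour about absorbing $x$-weights via Gaussian decay is unnecessary; your self-correction to the clean cancellation is the route the paper actually takes.
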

    \begin{proof}
Let \( u, v \in \mathcal{C}^{\infty}(\R^{d}, \mathbb{C}) \) and \( x \in \R^{d} \). For each \( k \in \{1, \dots, d\} \), we have: 
\begin{align*}
\bar{v}(x)\tilde{\partial}_{k}u(x) - u(x)\overline{\tilde{\partial}_{k}v(x)}
&= i v(x) \partial_k u(x) + \bar{v}(x) \sum_{j \neq k} \frac{B_{jk}}{2} x_j u(x) \\
&\quad + i u(x) \partial_k \bar{v}(x) - u(x) \sum_{j \neq k} \frac{B_{jk}}{2} x_j \bar{v}(x) = i \partial_k (u \bar{v})(x).
\end{align*}
By induction, for any multi-index \( \alpha \in \{1, \dots, d\}^{m} \), we obtain:
\[
i^{m} \partial^{\alpha} |u|^{2}(x) = \sum_{\beta \preceq \alpha} (-1)^{m - |\beta|} \tilde{\partial}^{\beta} u(x) \, \overline{ \tilde{\partial}^{\alpha \setminus \beta} u(x) }.
\]
This leads to the estimate:
\begin{align*}
\sum_{\alpha \in \{1, \dots, d\}^{m}} \| \partial^{\alpha} |f|^{2} \|_{L^{1}(\R^{d})}
&\leq \sum_{|\alpha| = m} \sum_{\beta \preceq \alpha} \| \tilde{\partial}^{\beta} f \|_{L^{2}(\R^{d})} \, \| \tilde{\partial}^{\alpha \setminus \beta} f \|_{L^{2}(\R^{d})} \\
&= \sum_{k = 0}^{m} \binom{m}{k} \sum_{\substack{|\beta| = k \\ |\beta'| = m-k}} 
\| \tilde{\partial}^{\beta} f \|_{L^{2}(\R^d)}^2 \, \| \tilde{\partial}^{\beta'} f \|_{L^{2}(\R^d)}\\
&\leq \sum_{k = 0}^{m} \binom{m}{k} d^{m/2} \sqrt{ \sum_{|\beta| = k} \| \tilde{\partial}^{\beta} f \|_{L^{2}(\R^d)}^2
\sum_{|\beta'| = m-k} \| \tilde{\partial}^{\beta'} f \|_{L^{2}(\R^d)}^2 } \\
&\leq \sum_{k = 0}^{m} \binom{m}{k} d^{m/2} \sqrt{ C_B(k) C_B(m-k) } \lvert f \rvert_{L^{2}(\R^d)}^2
\end{align*}
Using the definition of $C_B(k)$, we have
\begin{align*}
\sum_{\alpha \in \{1, \dots, d\}^{m}} \| \partial^{\alpha} |f|^{2} \|_{L^{1}(\R^{d})}&\leq \sum_{k = 0}^{m} \binom{m}{k} d^{\frac{m}{2}}\Bigl( \frac{d}{2}(E+\sqrt{\|B^2\|_1}m)\Bigr)^{m/2} \lvert f \rvert_{L^{2}(\R^d)}^2\\
&\leq d^{\frac{dm}{2}}\Bigl( \frac{d}{2}(E+\sqrt{\|B^2\|_1}m)\Bigr)^{\frac{m}{2}} \lvert f \rvert_{L^{2}(\R^d)}^2.
\end{align*}
To prove estimate~\eqref{est2}, we apply the Sobolev embedding:
\[
\lvert f \rvert_{L^{\infty}(\R^{d})} \leq C_{\text{sob}} \lvert f \rvert_{W^{d+1,1}(\R^{d})},
\]
which yields:
\begin{align*}
\sum_{\alpha \in \{1, \dots, d\}^{m}} \| \partial^{\alpha} |f|^{2} \|_{L^{\infty}(\R^{d})}
&\leq C_{\text{sob}} \sum_{\alpha \in \{1, \dots, d\}^{m}} \| \partial^{\alpha} |f|^{2} \|_{W^{d+1,1}(\R^{d})} \\
&= C_{\text{sob}} \sum_{\alpha \in \{1, \dots, d\}^{m}} \sum_{|\beta| \leq d+1} \| \partial^{\beta} \partial^{\alpha} |f|^{2} \|_{L^{1}(\R^d)} \\
&= C_{\text{sob}} \sum_{m' = m}^{m + d + 1} \sum_{|\alpha'| = m'} \| \partial^{\alpha'} |f|^{2} \|_{L^{1}(\R^d)} \\
&\leq C_{\text{sob}} \sum_{m' = m}^{m + d + 1} C_B(m') \lvert f \rvert_{L^{2}(\R^d)}^2.\qedhere
\end{align*}
\end{proof}
\section{Analytic Foundations and Spectral Inequalities for Magnetic Laplacians}\label{sec:main-proof}
In this section we prove a spectral inequality for the high-dimensional magnetic Laplace operator, extending classical Laplacian results of Kovrijkine~\cite{Kovrijkine-00} within the abstract framework of~\cite{EgidiS-21} and incorporating the two-dimensional techniques of~\cite{PfeifferT-25}. 
The strategy of the proof is inspired by Kovrijkine’s approach for the classical Laplacian and further builds on the abstract framework developed in~\cite{EgidiS-21}. Additionally, we adapt techniques from~\cite{PfeifferT-25}, where magnetic Bernstein inequalities and spectral inequalities for the Landau operator on thick sets are established in 2 dimensions.

\subsection{Quantitative Lower Bounds via Dimension Reduction}
We now show that functions in a spectral subspace of the high-dimensional magnetic Laplacian have strong analyticity, extending smoothly into a complex neighborhood.
\begin{theorem}\label{analyticity}
Let \( E > 0 \) and  \( f \in \operatorname{Ran} \mathbf{1}_{(-\infty, E]}(H_{B}) \). Then the function \( |f|^{2} \) is real-analytic on \( \R^{d} \), and it extends to a holomorphic function \( \Phi \colon \mathbb{C}^{d} \to \mathbb{C} \) in a neighborhood of \( \R^{d} \).
\end{theorem}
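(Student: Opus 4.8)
The plan is to take $g:=\lvert f\rvert^2$, convert the Bernstein-type bound~\eqref{est2} on the ordinary derivatives of $g$ into a pointwise bound on each individual $\partial^\gamma g$ that is \emph{uniform} in $x\in\R^d$, and then to show that the multivariate Taylor series of $g$ centred at an arbitrary point converges absolutely and locally uniformly on all of $\mathbb{C}^d$. This simultaneously yields real-analyticity of $g$ on $\R^d$ and a holomorphic extension $\Phi$ to a neighbourhood of $\R^d$ in $\mathbb{C}^d$ (in fact to all of $\mathbb{C}^d$).

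First I would do the combinatorial bookkeeping. For a standard multi-index $\gamma=(\gamma_1,\dots,\gamma_d)$ with $|\gamma|=m$, the operator $\partial_{\alpha_1}\cdots\partial_{\alpha_m}$ equals $\partial^\gamma$ for each of the $m!/\gamma!$ ordered tuples $\alpha\in\{1,\dots,d\}^m$ of type $\gamma$, so~\eqref{est2} reads
\[
\sum_{|\gamma|=m}\frac{m!}{\gamma!}\,\bigl\|\partial^\gamma g\bigr\|_{L^\infty(\R^d)}\ \le\ B_m\ :=\ C_{\text{sob},d}\sum_{m'=m}^{m+d+1}C'_B(m')\,\lvert f\rvert^2_{L^2(\R^d)},
\]
whence $\bigl\|\partial^\gamma g\bigr\|_{L^\infty(\R^d)}\le (\gamma!/m!)\,B_m$ for every $\gamma$ with $|\gamma|=m$. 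Inserting the explicit form of $C'_B(m')$ and using $m'\le m+d+1$, one absorbs the $d+2$ summands and the powers of $d$ and $\tfrac d2$ into a single constant $C=C(E,B,d)\ge 1$ so that $B_m\le C\,(C(m+1))^{(m+d+1)/2}\,\lvert f\rvert^2_{L^2(\R^d)}$ for all $m\ge1$.

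Then I would sum the Taylor series. Fix $x_0\in\R^d$ and $z\in\mathbb{C}^d$; grouping by total degree and using $\sum_{|\gamma|=m}\lvert(z-x_0)^\gamma\rvert\le\bigl(\sum_{i=1}^d\lvert z_i-x_{0,i}\rvert\bigr)^m=:\rho^m$, the degree-$m$ block of the series is bounded in modulus by
\[
\sum_{|\gamma|=m}\frac{1}{\gamma!}\,\bigl\|\partial^\gamma g\bigr\|_{L^\infty}\,\lvert(z-x_0)^\gamma\rvert\ \le\ \frac{B_m}{m!}\,\rho^m\ \le\ C\,\lvert f\rvert^2_{L^2}\,\frac{(C(m+1))^{(m+d+1)/2}}{m!}\,\rho^m .
\]
Stirling ($m!\ge(m/e)^m$) gives $(C(m+1))^{m/2}/m!\le(e\sqrt{2C/m}\,)^m$, while the leftover factor $(C(m+1))^{(d+1)/2}$ is merely polynomial in $m$, so the degree-$m$ block is at most $C'\,\mathrm{poly}(m)\,(e\sqrt{2C}\,\rho/\sqrt m\,)^m$, which is summable over $m$ uniformly on every ball $\{\rho\le R\}$, $R>0$. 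Hence the series converges to a holomorphic function on all of $\mathbb{C}^d$, and since the bounds are independent of $x_0$ the classical Cauchy-estimate criterion for real-analyticity applies on $\R^d$; the local expansions patch into the desired extension $\Phi$.

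I expect the only genuinely delicate step to be the bookkeeping in the second paragraph: one must retain the multinomial weight $m!/\gamma!$ hidden inside the ordered-tuple sum of~\eqref{est2}, because it is precisely the division by $m!$ that makes $B_m/m!$ decay super-exponentially. That decay is available at all only because the magnetic Bernstein constant $C_B(m)\sim(\tfrac d2 m)^m$ enters~\eqref{est1}--\eqref{est2} through its \emph{square root} $C'_B(m)\sim(Cm)^{m/2}$ — the Cauchy--Schwarz splitting of $\partial^\alpha\lvert f\rvert^2$ into a product of two magnetic derivatives of $f$ in the preceding theorem halves the effective order, turning a factorially growing bound into a sub-factorial one and thereby upgrading mere smoothness to an entire extension.
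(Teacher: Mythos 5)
Your argument is correct and follows the same overall route as the paper's proof: both start from the global Bernstein-type bound~\eqref{est2} and exploit the sub-factorial growth $C'_B(m)\sim(Cm)^{m/2}$ to sum the Taylor series of $|f|^2$. Where you differ is in how the pointwise bound on an individual derivative $\partial^\gamma|f|^2$ is extracted from the sum over ordered tuples. You keep the multinomial weight $m!/\gamma!$, obtaining $\|\partial^\gamma|f|^2\|_{L^\infty}\le(\gamma!/m!)B_m$ and hence a degree-$m$ Taylor block of size $B_m\rho^m/m!$, which decays super-exponentially for every $\rho$ and thus gives an \emph{entire} extension of $|f|^2$ to $\mathbb{C}^d$. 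The paper simply bounds each individual derivative by the whole sum $B_m$ and then invokes $(Cm)^{m/2}\ll m!$ to claim $\|\partial^\gamma|f|^2\|_{L^\infty}\le C^{|\gamma|+1}\gamma!$; this is true (even the smallest $\gamma!$ over $|\gamma|=m$ is of order $(m/(de))^m$, still far larger than $(Cm)^{m/2}$), but only yields holomorphy on a tube of fixed width around $\R^d$. Both suffice for the theorem as stated; your bookkeeping recovers the strictly stronger entireness statement from the same ingredients.
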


\begin{proof}
By estimate~\eqref{est2}, the derivatives of order \( m \) are bounded by the largest term in the sum on the right-hand side:
\begin{align*}
\sum_{\alpha \in \{1,\dots,d\}^{m}}
   \|\partial^{\alpha}|f|^{2}\|_{L^{\infty}(\R^{d})}
&\le\\  (d+2)\,C_{\mathrm{sob}} & d^{\frac{d(m+d+1)}{2}}
   \Bigl(\frac{d}{2}(E+\sqrt{\|B^{2}\|_{1}}(m+d+1))\Bigr)^{\frac{m+d+1}{2}} \|f\|_{L^{2}(\R^{d})}^{2}.
\end{align*}
The right-hand side exhibits growth of the order \( m^{m/2} \). Since \( m^{m/2} \ll m! \) as \( m \to \infty \), there exists a constant \( C > 0 \) such that
\[
\| \partial^{\alpha} |f|^{2} \|_{L^{\infty}} \leq C^{|\alpha|+1} \alpha!.
\]
This bound ensures that the Taylor series of \( |f|^{2} \) converges locally uniformly, defining a holomorphic extension \( \Phi \) to a neighborhood of \( \R^{d} \).
\end{proof}

We next require a local lower bound for these analytic functions. To do this, for $r>0$ and $\ell=(\ell_{1},\ell_{2},\cdots,\ell_{d})\in (0,\infty)^{d}$, we denote $D(r)=\{ z \in \mathbb{C}: |z|<r\}$ and $D_{\ell}=D(\ell_{1})\times D(\ell_{2}) \times \cdots\times D(\ell_{d})$, respectively.
\begin{lemma}\label{auxlemma}
	Let $\phi: D(4+\epsilon)\rightarrow \mathbb{C}$ for some $\epsilon>0$ be an analytic function with $|\phi(0)|\geq 1$. Moreover, let $E\subset [0,1]$ be measurable with positive measure. Then
	\begin{equation}
		\sup_{t \in [0,1]} |\phi(t)|\leq \left(\frac{12}{\operatorname{Vol}(E)} \right)^{\frac{2\log M_{\phi}}{\log 2}} \sup_{t \in E}  |\phi (t)|, 
	\end{equation}
	where $M_{\phi}=\sup_{z\in D(4)}|\phi(z)|$.
\end{lemma}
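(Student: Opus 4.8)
The plan is to reduce the estimate to the classical Remez inequality for polynomials via a Blaschke-type factorisation of $\phi$ over its zeros. The four ingredients are: Jensen's formula, used to count the zeros of $\phi$ in a fixed disc; the factorisation $\phi=b\cdot g$ with $b$ a finite Blaschke product and $g$ essentially non-vanishing; Harnack's inequality, used to show that $g$ is nearly constant on $[0,1]$; and the Remez inequality applied to the polynomial part of $b$. Since $|\phi(0)|\ge 1$, the function $\phi$ is not identically zero and $M_{\phi}\ge 1$. Let $z_1,\dots,z_n$ be the zeros of $\phi$ in the open disc $D(2)$, repeated with multiplicity. Since $|\phi|\le M_{\phi}$ on $|z|=4$, Jensen's formula on $D(4)$ (whose closure lies in $D(4+\epsilon)$) gives $\sum_{j=1}^{n}\log(4/|z_j|)\le\log M_{\phi}$, and since $4/|z_j|\ge 2$ this yields $n\le\log M_{\phi}/\log 2$.

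Next I would set $b(z):=\prod_{j=1}^{n}2(z-z_j)/(4-\overline{z_j}\,z)$ and $g:=\phi/b$. Each factor of $b$ is a Möbius self-map of the disc of radius $2$, so $b$ is holomorphic on $\overline{D(2)}$ (its poles sit at $|z|=4/|z_j|>2$), $|b|\equiv 1$ on $|z|=2$, and $|b(0)|=\prod_j(|z_j|/2)\le 1$. Consequently $g$ extends to a function holomorphic on $\overline{D(2)}$ that is zero-free on $D(2)$, satisfies $|g|=|\phi|\le M_{\phi}$ on $|z|=2$ and hence on $\overline{D(2)}$ by the maximum principle, and has $|g(0)|=|\phi(0)|/|b(0)|\ge 1$.

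I would then apply Harnack's inequality to the non-negative harmonic function $v:=\log M_{\phi}-\log|g|$ on $D(2)$: from $v(0)\le\log M_{\phi}$ one obtains $v(z)\le 3\,v(0)\le 3\log M_{\phi}$ for $|z|\le 1$ (Harnack on the disc of radius $2$), i.e. $M_{\phi}^{-2}\le|g|\le M_{\phi}$ on $\overline{D(1)}\supset[0,1]$. Writing $b=q\cdot r$ with the degree-$n$ polynomial $q(z)=\prod_{j=1}^n(z-z_j)$ and $r(z)=\prod_{j=1}^n 2/(4-\overline{z_j}\,z)$, the factor $r$ has small oscillation on $[0,1]$: for $s,t\in[0,1]$ and $|z_j|\le 2$ one has $2\le|4-\overline{z_j}s|\le 6$, hence $|r(t)|/|r(s)|\le 3^n$, so $\sup_{[0,1]}|r|\le 3^n\inf_{[0,1]}|r|\le 3^n\inf_E|r|$. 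The classical Remez inequality (applicable since $\operatorname{Vol}(E)\le 1$) gives $\sup_{[0,1]}|q|\le(4/\operatorname{Vol}(E))^n\sup_E|q|$; combining this with the elementary bound $\sup_E(|q|\,|r|)\ge(\sup_E|q|)(\inf_E|r|)$ yields $\sup_{[0,1]}|b|\le(12/\operatorname{Vol}(E))^{n}\sup_E|b|$.

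Finally, since $M_{\phi}^{-2}\le|g|\le M_{\phi}$ on $[0,1]$, we get $\sup_{[0,1]}|\phi|\le M_{\phi}\sup_{[0,1]}|b|$ and $\sup_E|b|\le M_{\phi}^{2}\sup_E|\phi|$, hence $\sup_{[0,1]}|\phi|\le M_{\phi}^{3}(12/\operatorname{Vol}(E))^{n}\sup_E|\phi|$. Using $n\le\log M_{\phi}/\log 2$, $M_{\phi}^{3}=2^{3\log M_{\phi}/\log 2}$ and $\operatorname{Vol}(E)\le 1$, one checks $M_{\phi}^{3}(12/\operatorname{Vol}(E))^{n}\le(96/\operatorname{Vol}(E))^{\log M_{\phi}/\log 2}\le(12/\operatorname{Vol}(E))^{2\log M_{\phi}/\log 2}$, which is exactly the asserted bound. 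All four structural ingredients are classical; I expect the only delicate point to be the bookkeeping of the numerical constants and radii so that the final constant comes out exactly $12$ with exponent exactly $2\log M_{\phi}/\log 2$ — the key being that the multiplicative loss $M_{\phi}^{3}$ from the Harnack step is absorbed by doubling the exponent, which works precisely because $\operatorname{Vol}(E)\le 1$.
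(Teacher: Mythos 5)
Your proof is correct, and it follows exactly the standard Kovrijkine–Remez strategy (Jensen's formula to count zeros, Blaschke factorization, Harnack to control the zero-free part, Remez for the polynomial part, then absorb the Harnack loss by doubling the exponent using $\operatorname{Vol}(E)\le 1$). The paper itself does not include a proof but simply cites \cite[Lemma~15]{PfeifferT-25}, whose argument is the same; your write-up is a self-contained version of that lemma with the constants worked out to match.
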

For the reader’s convenience, we recommend consulting \cite[Lemma 15]{PfeifferT-25}.  Now we will prove Lemma \ref{technical}.
\begin{lemma}\label{technical}
Let \( Q \subset \R^{d} \) be a hyperrectangle with side lengths \( \ell_1, \ldots, \ell_d > 0 \), aligned with the coordinate axes. Suppose \( g: Q \rightarrow \mathbb{C} \) is a non-vanishing function that admits an analytic continuation \( G \) to the complex neighborhood \( Q + D(4\ell) \subset \mathbb{C}^d \), where \( \ell :=(\ell_1, \ldots, \ell_d )\). Then, for any measurable subset \( \omega \subset Q \) and any linear bijection \( \psi: \R^d \rightarrow \R^d \), the following inequalities hold:
\begin{align*}
\|g\|_{L^1(Q \cap \omega)} &\geq \frac{1}{2} \left( \frac{\operatorname{Vol}(\psi(Q \cap \omega))}{24 \operatorname{Vol}(\mathcal{S}) \operatorname{diam}(\psi(Q))^d} \right)^{\frac{2 \log M}{\log 2}} \cdot \frac{\operatorname{Vol}(Q \cap \omega)}{\operatorname{Vol}(Q)} \, \|g\|_{L^1(Q)} \\
&\geq \frac{1}{2} \left( \frac{\operatorname{Vol}(\psi(Q \cap \omega))}{24 \operatorname{Vol}(\mathcal{S}) \operatorname{diam}(\psi(Q))^d} \right)^{\frac{2 \log M}{\log 2} + 1} \, \|g\|_{L^1(Q)},
\end{align*}
where \( \mathcal{S} \) is the unit sphere in \( \R^d \), and
\(
M := \frac{\operatorname{Vol}(Q)}{\|g\|_{L^1(Q)}} \sup_{z \in Q + D(4\ell)} |G(z)| \geq 1.
\)
\end{lemma}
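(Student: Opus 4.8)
\emph{Proof strategy.} The case $\operatorname{Vol}(Q\cap\omega)=0$ is trivial, so assume $\operatorname{Vol}(Q\cap\omega)>0$. The plan is to run Kovrijkine's analytic-continuation scheme, organised so that the linear bijection $\psi$ enters only through a choice of polar coordinates. Since replacing $g$ by $\operatorname{Vol}(Q)\,\|g\|_{L^1(Q)}^{-1}g$ changes neither $M$ nor the asserted inequalities, we may assume $\|g\|_{L^1(Q)}=\operatorname{Vol}(Q)$ (so the average of $|g|$ over $Q$ is $1$) and $M=\sup_{z\in Q+D(4\ell)}|G(z)|\ge 1$. By continuity I then fix a point $x_0\in Q$ with $|g(x_0)|\ge 1$.

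For the one-dimensional reduction, given $\theta\in\mathcal S$ I put $v_\theta:=\psi^{-1}\theta$ and $\hat\rho_\theta:=\min_{j:(v_\theta)_j\ne 0}\ell_j/|(v_\theta)_j|$, so that $\hat\rho_\theta|(v_\theta)_j|\le\ell_j$ for every $j$, and set $\phi_\theta(w):=g(x_0+\hat\rho_\theta w\,v_\theta)/|g(x_0)|$. The bound $\hat\rho_\theta|(v_\theta)_j|\le\ell_j$ forces $x_0+\hat\rho_\theta w v_\theta\in Q+D(4\ell)$ for $|w|<4$, so $\phi_\theta$ is holomorphic on $D(4)$ — on a slightly larger disc after shrinking $Q$ by an arbitrarily small amount, which accounts for the $+\epsilon$ in Lemma~\ref{auxlemma} — with $|\phi_\theta(0)|=1$ and $\sup_{D(4)}|\phi_\theta|\le M/|g(x_0)|\le M$. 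Feeding the sublevel set $\{s\in[0,1]:|\phi_\theta(s)|<\tau\}$ into Lemma~\ref{auxlemma}, using $|\phi_\theta(0)|\ge 1$, and replacing the exponent $1/\bigl(2\log_2\sup_{D(4)}|\phi_\theta|\bigr)$ by the smaller $\log 2/(2\log M)$ (legitimate since $\tau<1$; the degenerate case $\sup_{D(4)}|\phi_\theta|=1$ is trivial), I obtain for every $\tau\in(0,1]$
\[
\operatorname{Vol}\bigl(\{s\in[0,1]:|\phi_\theta(s)|<\tau\}\bigr)\ \le\ 12\,\tau^{\frac{\log 2}{2\log M}} .
\]

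Next I would integrate over $\theta\in\mathcal S$ in the $\psi$-adapted polar coordinates $x=x_0+\psi^{-1}(r\theta)$, whose Jacobian is $|\det\psi|^{-1}r^{d-1}$. Two elementary facts close the argument. First, the chord $\{r\ge 0:x_0+r v_\theta\in Q\}$ is an interval inside $[0,\hat\rho_\theta]$. Second — the key point, valid for any linear bijection $\psi$ — one has $\hat\rho_\theta\le\operatorname{diam}(\psi(Q))$: indeed $\hat\rho_\theta v_\theta$ has all coordinates of modulus $\le\ell_j$, hence is a difference of two points of $Q$, so its image $\psi(\hat\rho_\theta v_\theta)=\hat\rho_\theta\theta$ has norm $\hat\rho_\theta$ bounded by $\operatorname{diam}(\psi(Q))$. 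Substituting $r=\hat\rho_\theta s$, bounding $s^{d-1}\le 1$ on $[0,1]$ and $\hat\rho_\theta\le\operatorname{diam}(\psi(Q))$, and using $|g(x_0)|\ge 1$ to pass from $\{|g|<\tau\}$ to $\{|\phi_\theta|<\tau\}$, the previous display yields
\[
\operatorname{Vol}\bigl(\{x\in Q\cap\omega:|g(x)|<\tau\}\bigr)\ \le\ \frac{12\,\operatorname{Vol}(\mathcal S)\,\operatorname{diam}(\psi(Q))^{d}}{|\det\psi|}\,\tau^{\frac{\log 2}{2\log M}} .
\]
Choosing $\tau$ so that the right-hand side equals $\tfrac12\operatorname{Vol}(Q\cap\omega)$, namely $\tau=\bigl(\operatorname{Vol}(\psi(Q\cap\omega))/(24\operatorname{Vol}(\mathcal S)\operatorname{diam}(\psi(Q))^{d})\bigr)^{2\log M/\log 2}$ — which lies in $(0,1]$ because $\operatorname{Vol}(\psi(Q))\le\operatorname{Vol}(\mathcal S)\operatorname{diam}(\psi(Q))^{d}$ — gives
\[
\|g\|_{L^1(Q\cap\omega)}\ \ge\ \tau\,\operatorname{Vol}\bigl(\{x\in Q\cap\omega:|g(x)|\ge\tau\}\bigr)\ \ge\ \tfrac12\,\tau\,\operatorname{Vol}(Q\cap\omega),
\]
which is the first claimed inequality after undoing the normalization. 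The second follows by spending a factor: since $\operatorname{Vol}(Q\cap\omega)/\operatorname{Vol}(Q)=\operatorname{Vol}(\psi(Q\cap\omega))/\operatorname{Vol}(\psi(Q))\ge\operatorname{Vol}(\psi(Q\cap\omega))/(24\operatorname{Vol}(\mathcal S)\operatorname{diam}(\psi(Q))^d)$, multiplying the first bound by $\operatorname{Vol}(Q\cap\omega)/\operatorname{Vol}(Q)$ increases the exponent by one.

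I expect the one genuinely delicate step to be the third one: choosing polar coordinates adapted to $\psi$ and checking simultaneously that, after the correct rescaling to the unit disc, the complex lines through $x_0$ stay inside the thin anisotropic neighbourhood $Q+D(4\ell)$, and that the angular integral $\int_{\mathcal S}\hat\rho_\theta^{\,d}\,d\theta$ is controlled by $\operatorname{diam}(\psi(Q))^d\operatorname{Vol}(\mathcal S)$. Both hinge on the clean geometric inequality $\hat\rho_\theta\le\operatorname{diam}(\psi(Q))$; once that is in place, the remaining manipulations are routine bookkeeping.
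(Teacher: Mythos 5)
Your proof is correct and rests on the same core mechanism as the paper: normalize so that a pivot point $x_0$ satisfies $|g(x_0)|\ge 1$, restrict $g$ to complex lines through $x_0$, invoke Lemma~\ref{auxlemma} on each restriction, and aggregate via polar coordinates adapted to $\psi$, with the decisive geometric input being $\hat\rho_\theta\le\operatorname{diam}(\psi(Q))$. The one execution difference is in the aggregation step: the paper uses a mean-value/averaging argument to exhibit a single direction $\zeta$ along which the sublevel set is dense, applies Lemma~\ref{auxlemma} once along that direction, and concludes by contradiction; you instead apply the one-dimensional bound in \emph{every} direction $\theta$, integrate over the sphere, and obtain the volume bound $\operatorname{Vol}(\{|g|<\tau\}\cap Q)\le 12\operatorname{Vol}(\mathcal S)\operatorname{diam}(\psi(Q))^d|\det\psi|^{-1}\tau^{\log2/(2\log M)}$ directly. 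This is a modest streamlining — it removes the need to argue that a good direction exists — while the geometric content and the way $\psi$ enters (only through $\hat\rho_\theta\le\operatorname{diam}(\psi(Q))$ and $|\det\psi|$) are the same.

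One point you flag but handle somewhat loosely is the $D(4)$ versus $D(4+\epsilon)$ domain issue for $\phi_\theta$: with $\hat\rho_\theta=\min_j\ell_j/|(v_\theta)_j|$ you only get holomorphy on the open disc $D(4)$, whereas Lemma~\ref{auxlemma} asks for $D(4+\epsilon)$. Your suggested fix (``shrinking $Q$ by an arbitrarily small amount'') is a bit abrupt, since $x_0$ and the sublevel sets live in $Q$, not a shrunken $Q'$; a cleaner route is to replace $\hat\rho_\theta$ by $(1-\delta)\hat\rho_\theta$ for small $\delta>0$ (which gives holomorphy on $D(4/(1-\delta))$), apply the lemma, and let $\delta\to0$ — the chord in $[0,(1-\delta)^{-1}]$ is handled by monotone convergence. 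This is a minor technicality at the same level of informality as the paper's own sketch, but worth making precise if you write this up.
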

\noindent
Results akin to Lemma~\ref{technical} appear at several places in the literature, with the underlying idea tracing back to~\cite{Kovrijkine-00}.
The present proof is inspired by \cite[Lemma~14]{PfeifferT-25}, where an analogous statement is proved for \(L^2\)-norms and a linear bijection \(A\) is introduced.
This map is essential in the proof of Theorem~\ref{Theorem: spectral inequality}, as it allows for an optimization of the constants: without \(A\), the eccentricity of rectangles with side lengths \((\ell_1,\dots,\ell_d)\), more precisely, the ratio between their diameter and volume, would enter explicitly.
The use of \(A\) removes this geometric dependence, so that only \(|\ell|_1\) appears in the final estimates.

The proof of Lemma~\ref{technical} combines a dimension reduction argument with a one-dimensional estimate due to~\cite{Kovrijkine-00}, itself based on the classical Remez inequality for polynomials and Blaschke products.
\begin{proof}
	The proof rests on a dimension reduction strategy inspired by \cite{Kovrijkine-00} and \cite{PfeifferT-25}. Our main objective is to bound the volume of the "sublevel" set $W$, defined as the set of points where the function is small:
	\[
	W:=\left\{x \in Q: |g(x)| \leq C \| g\| _{L^{1}(Q)}\right\},
	\]
	for a carefully chosen constant $C > 0$. If we can demonstrate that $\operatorname{Vol}(W) \leq \frac{1}{2}\operatorname{Vol}(Q \cap \omega)$, the desired result follows immediately by integrating $|g|$ over the remaining portion of the set $Q \cap \omega$.
	
	To establish this volume bound, we start by selecting a "pivot" point $y \in Q$ where the function is relatively large, specifically, satisfying $|g(y)|\geq \|g\|_{L^{1}(Q)}/\operatorname{Vol}(Q)$. The challenge is to reduce the problem to one dimension by finding a line segment through $y$ that captures the behavior of $W$.
	
	We analyze the geometry of $W$ under the linear transformation $\psi$. By integrating in spherical coordinates centered at $\psi(y)$, we can relate the total volume of $\psi(W)$ to the lengths of its one-dimensional slices. Specifically,
	\[
	\operatorname{Vol}(\psi(W)) \leq \operatorname{Vol}(\mathcal{S}) \int_{\lvert \xi \rvert=1} \int_{0}^{\infty}\chi_{\psi(W)}(\psi(y)+s\xi ) \, s^{d-1}ds \, d\sigma(\xi).
	\]
	Since the maximal length of any such slice is bounded by $\operatorname{diam}(\psi(Q))$, an averaging argument (via the Mean Value Theorem) guarantees the existence of a specific direction $\zeta$ such that the corresponding line segment $I = \{y+s\tilde{\zeta}:s\geq 0\}\cap Q$ is "dense" in $W$. That is,
	\begin{equation}\label{density_bound}
		\frac{\operatorname{Vol}(I\cap W)}{\operatorname{Vol}(I)} \geq \frac{\operatorname{Vol}(\psi(W))}{\operatorname{Vol}(\mathcal{S}) \operatorname{diam}(\psi(Q))^d}.
	\end{equation}
	With this geometric foothold established, we turn to the analytic properties of $G$. We restrict the function to the segment $I$ by defining the univariate function $\phi(z) := \frac{\operatorname{Vol}(Q)}{\| g\| _{L^{1}(Q)}}G( y+\operatorname{Vol}(I)\tilde{\zeta}z)$ on the disk $D(4+\epsilon)$.
	
	By construction, $|\phi(0)| \geq 1$ and $M_\phi \leq M$. We can therefore apply the one-dimensional estimate from Lemma \ref{auxlemma}, taking $E$ to be the portion of the segment lying within $W$. Using the density bound \eqref{density_bound}, this yields:
	\begin{equation}\label{3}
	    \sup_{x \in W} |g(x)| \geq \left( \frac{\operatorname{Vol}(\psi(W))}{12 \operatorname{Vol}(\mathcal{S}) \operatorname{diam}(\psi(Q))^{d}}\right)^{\frac{2\log M_{\phi}}{\log 2}} \frac{\| g\| _{L^{1}(Q)}}{\operatorname{Vol}(Q)}.
	\end{equation}
	 Combining equation \eqref{density_bound} with \eqref{3} together with putting definition of the specific number $C$ gives the claim by the following argument:
		\begin{align*}
			\sup_{x\in W}|g(x)|
            \leq& \left( \frac{\operatorname{Vol}(\psi(Q\cap W))}{24 \operatorname{Vol}(\mathcal{S}) \operatorname{diam}(\psi(Q))^{d}}\right)^{\frac{2\log M_{\phi}}{\log 2}} \frac{1}{\operatorname{Vol}(Q)}\| g\| _{L^{1}(Q)}\\
			\leq & \left( \frac{\operatorname{Vol}(\psi(Q\cap W))}{2 \operatorname{Vol}(\psi(W))}\frac{\operatorname{Vol}(\psi(W))}{12 \operatorname{Vol}(\mathcal{S}) \operatorname{diam}(\psi(Q))^{d}}   \right)^{\frac{2\log M_{\phi}}{\log 2}} \frac{\| g\| _{L^{1}(Q)}}{\operatorname{Vol}(Q)}\\
			\leq &  \left( \frac{\operatorname{Vol}(Q \cap \omega)}{2\operatorname{Vol}(W)}\right)^{\frac{2\log M_{\phi}}{\log 2}}\sup_{x\in W}|g(x)|.\qedhere
		\end{align*}
\end{proof}

\subsection{Decomposition into Good and Bad Hyperrectangles} \label{subsec:goodandbad}
We consider a partition of \( \R^{d} \) using a countable family of open hyperrectangles \( (Q_j)_{j \in \mathbb{N}} \). Each hyperrectangle \( Q_j \) has side lengths \( \ell_1, \dots, \ell_d \), is aligned with the coordinate axes, and is disjoint from all others. The union \( \bigcup_{j \in \mathbb{N}} Q_j \) covers \( \R^d \) up to a set of Lebesgue measure zero.

\begin{definition}
Let \( E> 0 \), and let \( f \in \operatorname{Ran} \mathbf{1}_{(-\infty, E]}(H_B) \). A hyperrectangle \( Q_j \) is called \emph{good} if, for all \( m \in \mathbb{N} \) and all multi-indices \( \alpha \in \{1, \dots, d\}^{m} \), the following estimate holds:
\[
\| \partial^{\alpha} |f|^{2} \|_{L^{1}(Q_j)} \leq (d+2)^{m+1} C'_B(m) \lvert f \rvert^{2}_{L^{2}(Q_j)},
\]
where
\[
C'_B(m) = d^{\frac{dm}{2}} \left(\frac{d}{2} (E + \sqrt{ \|B^2\|_1 } \, m) \right)^{\frac{m}{2}}.
\]
If this condition fails, \( Q_j \) is called \emph{bad}.
\end{definition}
\noindent The following lemma states that the total \( L^{2} \)-norm of a function \( f \) over the good hyperrectangles is at least half of its total \( L^{2} \)-norm on \( \R^{d} \).
\begin{lemma}\label{bad}
Let \( E, B > 0 \), and let \( f \in \operatorname{Ran}{1}_{(-\infty, E]}(H_B) \). Then the sum of the \( L^2 \)-norms of \( f \) over the good hyperrectangles satisfies
\[
\sum_{j : Q_j \text{ good}} \lvert f \rvert^{2}_{L^{2}(Q_j)} \geq \frac{1}{2} \lvert f \rvert^{2}_{L^{2}(\R^{d})}.
\]
\end{lemma}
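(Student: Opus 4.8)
The plan is to bound the $L^{2}$-mass carried by the \emph{bad} hyperrectangles and show it is strictly less than half of $\|f\|_{L^{2}(\R^{d})}^{2}$. Since the $(Q_{j})$ are pairwise disjoint and $\bigcup_{j}Q_{j}=\R^{d}$ up to a null set, one has $\sum_{j}\|f\|_{L^{2}(Q_{j})}^{2}=\|f\|_{L^{2}(\R^{d})}^{2}$, so the claimed lower bound over good rectangles is equivalent to $\sum_{j\ \mathrm{bad}}\|f\|_{L^{2}(Q_{j})}^{2}\le\frac12\|f\|_{L^{2}(\R^{d})}^{2}$.

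First I would stratify the bad rectangles by the order at which the defining estimate fails. For $m\in\mathbb N$ call $Q_{j}$ \emph{$m$-bad} if there is $\alpha\in\{1,\dots,d\}^{m}$ with $\|\partial^{\alpha}|f|^{2}\|_{L^{1}(Q_{j})}>(d+2)^{m+1}C'_{B}(m)\,\|f\|_{L^{2}(Q_{j})}^{2}$; a bad rectangle is then $m$-bad for at least one $m$. Note that $m=0$ (empty multi-index, $C'_{B}(0)=1$) reads $\|f\|_{L^{2}(Q_{j})}^{2}>(d+2)\|f\|_{L^{2}(Q_{j})}^{2}$, which never holds, so only $m\ge1$ is relevant. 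Now fix $m\ge1$. For each $m$-bad $Q_{j}$, picking a witnessing $\alpha$ and bounding one summand by the whole sum gives
\[
(d+2)^{m+1}C'_{B}(m)\,\|f\|_{L^{2}(Q_{j})}^{2}
<\sum_{\alpha\in\{1,\dots,d\}^{m}}\|\partial^{\alpha}|f|^{2}\|_{L^{1}(Q_{j})}.
\]
Summing this over all $m$-bad $Q_{j}$, enlarging the right-hand side to a sum over all $j$, and using disjointness together with $\sum_{j}\|\partial^{\alpha}|f|^{2}\|_{L^{1}(Q_{j})}=\|\partial^{\alpha}|f|^{2}\|_{L^{1}(\R^{d})}$, estimate~\eqref{est1} yields
\[
(d+2)^{m+1}C'_{B}(m)\!\!\sum_{j\ m\text{-bad}}\!\!\|f\|_{L^{2}(Q_{j})}^{2}
<\sum_{\alpha\in\{1,\dots,d\}^{m}}\|\partial^{\alpha}|f|^{2}\|_{L^{1}(\R^{d})}
\le C'_{B}(m)\,\|f\|_{L^{2}(\R^{d})}^{2},
\]
hence $\sum_{j\ m\text{-bad}}\|f\|_{L^{2}(Q_{j})}^{2}<(d+2)^{-(m+1)}\|f\|_{L^{2}(\R^{d})}^{2}$ after dividing by $(d+2)^{m+1}C'_{B}(m)>0$.

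Finally I would sum over $m\ge1$: since every bad rectangle is $m$-bad for some such $m$,
\[
\sum_{j\ \mathrm{bad}}\|f\|_{L^{2}(Q_{j})}^{2}
\le\sum_{m=1}^{\infty}\sum_{j\ m\text{-bad}}\|f\|_{L^{2}(Q_{j})}^{2}
<\Bigl(\sum_{m=1}^{\infty}(d+2)^{-(m+1)}\Bigr)\|f\|_{L^{2}(\R^{d})}^{2}
=\frac{1}{(d+1)(d+2)}\,\|f\|_{L^{2}(\R^{d})}^{2}
<\tfrac12\|f\|_{L^{2}(\R^{d})}^{2},
\]
valid for $d\ge3$ (in fact $d\ge1$). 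Subtracting from $\|f\|_{L^{2}(\R^{d})}^{2}=\sum_{j}\|f\|_{L^{2}(Q_{j})}^{2}$ gives $\sum_{j\ \mathrm{good}}\|f\|_{L^{2}(Q_{j})}^{2}>\frac12\|f\|_{L^{2}(\R^{d})}^{2}$, as claimed.

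The argument is essentially a Chebyshev-type counting argument, so there is no deep difficulty; the only points needing care are the union bound over the order $m$ that is implicit in the definition of "bad" — this is what forces the geometrically decaying prefactor $(d+2)^{-(m+1)}$ — and the verification that the resulting geometric series lies genuinely below $1/2$, which is precisely the margin built into the constant $(d+2)^{m+1}$ in the definition of a good rectangle (a bare $2^{m+1}$ would be borderline).
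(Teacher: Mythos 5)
Your proof is correct and follows essentially the same union-bound / Chebyshev counting argument as the paper; you merely organize it by stratifying on the order $m$ and applying estimate~\eqref{est1} directly to the full sum over $\alpha$, which yields the slightly tighter bound $\frac{1}{(d+1)(d+2)}\|f\|_{L^2}^2$ on the bad mass, whereas the paper sums over all $(m,\alpha)$ pairs and bounds each single $\alpha$-term by $C'_B(m)\|f\|_{L^2}^2$ (picking up a factor $d^m$) to land exactly at $\frac12\|f\|_{L^2}^2$. Your observation that $m=0$ is vacuous is a correct (and harmless) refinement the paper does not bother to make.
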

\begin{proof}
If a hyperrectangle $Q_j$ is bad, the definition implies that for some $m \in \mathbb{N}$ and $\alpha \in \{1, \dots, d\}^{m}$,
\[
\lvert f \rvert^{2}_{L^{2}(Q_j)} < \frac{\| \partial^{\alpha} |f|^{2} \|_{L^{1}(Q_j)}}{(d+2)^{m+1} C'_B(m)}.
\]
We sum over all bad hyperrectangles by bounding them with the sum over all possible violations (a union bound). Using the global estimate from Theorem~\ref{analyticity}, $\| \partial^{\alpha} |f|^{2} \|_{L^{1}(\R^{d})} \leq C'_B(m) \lvert f \rvert^{2}_{L^{2}(\R^{d})}$, we obtain:
\begin{align*}
\sum_{j: Q_j \text{ bad}} \lvert f \rvert^{2}_{L^{2}(Q_j)}
&\leq \sum_{m=0}^{\infty} \sum_{\alpha \in \{1, \dots, d\}^{m}} \frac{\| \partial^{\alpha} |f|^{2} \|_{L^{1}(\R^{d})}}{(d+2)^{m+1} C'_B(m)} \\
&\leq \lvert f \rvert^{2}_{L^{2}(\R^{d})} \sum_{m=0}^{\infty} \frac{d^m C'_B(m)}{(d+2)^{m+1} C'_B(m)} \\
&= \frac{\lvert f \rvert^{2}_{L^{2}(\R^{d})}}{d+2} \sum_{m=0}^{\infty} \left( \frac{d}{d+2} \right)^{m}
= \frac{1}{2} \lvert f \rvert^{2}_{L^{2}(\R^{d})}.
\end{align*}
The conclusion follows by complementing with the good hyperrectangles.
\end{proof}

	\begin{proof}[Proof of Theorem~\ref{Theorem: spectral inequality}]
 By Theorem~\ref{analyticity}, the function $|f|^{2}$ is real-analytic on 
$\R^d$ and admits an analytic continuation $\Phi$ to $\mathbb{C}^d$.
On each good rectangle $Q_j$ we will apply Lemma~\ref{technical} to 
$g = |f|^{2}$; the only input required is a uniform bound on the derivatives
of $g$ at one point of the rectangle.
We claim that there exists $x_0 \in Q_j$ such that for every multi-index
$\alpha$ of order $m$,
\begin{equation}\label{eq:deriv-point}
|\partial^\alpha |f|^2(x_0)|
\;\le\;
\frac{(2(d+2))^{m+1} d^m\, C'_B(m)}{\operatorname{Vol}(Q_j)}
\,\|f\|_{L^2(Q_j)}^2 .
\end{equation}
If this were false, the reverse inequality would hold throughout $Q_j$.
Integrating over the rectangle and using that $Q_j$ is good then leads to
\[
\|f\|_{L^2(Q_j)}^2
<
\sum_{m=0}^{\infty} \frac{1}{2^{m+1}}\,\|f\|_{L^2(Q_j)}^2
=
\|f\|_{L^2(Q_j)}^2,
\]
a contradiction. Hence such an $x_0$ exists.

For every $z$ in the polydisc $D_{(5\ell_1,\dots,5\ell_d)}$, Taylor’s
formula around $x_0$ together with \eqref{eq:deriv-point} gives
\[
|\Phi(z)|
\;\le\;
2(d+2)
\,\frac{\|f\|_{L^2(Q_j)}^2}{\operatorname{Vol}(Q_j)}
\sum_{m=0}^\infty
\frac{\bigl(10(d+2)d|\ell|_1\bigr)^m\,C'_B(m)}{m!} .
\]
Let $\psi$ be an affine map sending $Q_j$ to the unit rectangle.  
Lemma~\ref{technical} applied to $g=|f|^2$, $\omega = Q_j \cap S$, linear bijection $A$ that maps each $Q_j$ to the unit rectangle,
and $G=\Phi$ then gives
\[
\|f\|_{L^1(Q_j\cap S)}^2
\;\ge\;
\frac12
\left(\frac{C_1}{\rho}\right)^{\!\frac{2\log M_\Phi}{\log 2}}
\,
\|f\|_{L^1(Q_j)}^2 .
\]
Recalling
\(
C'_B(m)=d^{\frac{m}{2}(d+1)}\Bigl(\tfrac12(E+\beta m)\Bigr)^{\frac m2},
\)
we obtain
\begin{align*}
M_\Phi
&\le 2(d+2)\sum_{m=0}^{\infty}
\frac{\bigl(10(d+2)\,d\,|\ell|_{1}\bigr)^{m}\,
d^{\frac{dm}{2} }\,
\bigl(\tfrac d2(E+\beta m)\bigr)^{\frac m2}}{m!}.
\end{align*}
Using the inequality
\(
(E+\beta m)^{\frac m2}
\le 2^{\frac m2}\bigl(E^{\frac m2}+(\beta m)^{\frac m2}\bigr),
\)
we infer
\begin{align*}
M_\Phi
&\le 2(d+2)\sum_{m=0}^{\infty}
\frac{\Bigl(10\sqrt2(d+2)\,d^{\frac{d+1}{2}}\,|\ell|_{1}\Bigr)^m
E^{\frac m2}}{m!}
\\
&\quad
+2(d+2)\sum_{m=0}^{\infty}
\frac{\Bigl(10\sqrt2(d+2)\,d^{\frac{d+1}{2}}\,|\ell|_{1}\,
\beta^{1/2}\sqrt m\Bigr)^m}{m!}.
\end{align*}
The first series equals an exponential, while for the second we use
\[
\sum_{m=0}^{\infty}\frac{(s\sqrt m)^m}{m!}
\le \exp(2s^2+s), \qquad s\ge0.
\]
Hence,
\begin{align*}
M_\Phi
&\le 2(d+2)\exp\!\Bigl(
10\sqrt2(d+2)\,d^{\frac{d+1}{2}}\,|\ell|_{1}\,\sqrt E
\Bigr)
\\
&\quad\times
\exp\!\Bigl(
10\sqrt2(d+2)\,d^{\frac{d+1}{2}}\,|\ell|_{1}\,\beta^{1/2}
+200(d+2)^2d^{d+1}|\ell|_1^2\beta
\Bigr).
\end{align*}
Combining the exponentials and taking logarithms yields
\[
\ln M_\Phi
\le
\ln\!\bigl(2(d+2)\bigr)
+ A_1\,|\ell|_{1}\,\sqrt E
+ A_2\,|\ell|_{1}\,\beta^{1/2}
+ A_3\,|\ell|_{1}^{2}\,\beta,
\]
with
\[
A_1=A_2=10\sqrt2(d+2)\,d^{\frac{d+1}{2}},
\qquad
A_3=200(d+2)^2\,d^{d+1},
\qquad
\beta=\sqrt{\|B^2\|_1}.
\]
Finally, summing over all good hyperrectangles and using Lemma~\ref{bad}, we obtain
\begin{align*}
\lvert f \rvert^2_{L^1(\R^d)} 
&\le 2 \sum_{\substack{j:\\ Q_j \text{ good}}} \lvert f \rvert^2_{L^1(Q_j)} \\
&\le \sum_{\substack{j:\\ Q_j \text{ good}}}
4\left(\frac{C_1}{\rho}\right)^{\!
C_2 + C_3 |\ell|_1 \sqrt{E} 
+ C_4 \bigl( 
|\ell|_1 \sqrt[4]{\|B^{2}\|_{1}} 
+ |\ell|_1^2 \sqrt{\|B^{2}\|_{1}} 
\bigr)} 
\lvert f \rvert^2_{L^2(Q_j \cap S)} \\
&\le 4\left(\frac{C_1}{\rho}\right)^{\!
C_2 + C_3 |\ell|_1 \sqrt{E} 
+ C_4 \bigl( 
|\ell|_1 \sqrt[4]{\|B^{2}\|_{1}} 
+ |\ell|_1^2 \sqrt{\|B^{2}\|_{1}} 
\bigr)} 
\lvert f \rvert^2_{L^2(S)}.
\end{align*}
To complete the argument, we must control the magnetic contributions in the exponent by the energy cut-off \(E\). 
It suffices to bound \(\sqrt{\|B^{2}\|_{1}}\) in terms of \(\inf\sigma(H_{B})\).
The magnetic matrix \(B\in\R^{d\times d}\) is antisymmetric and, by Remark~\ref{remark: decomposition}, can be written as the block–diagonal matrix $\mathcal{C}$.
By \eqref{eqn:operator_norm_equivalence}, the operator norms of \(B\) and \(\mathcal C\) coincide.
Since all norms on finite-dimensional spaces are equivalent, there exists \(c_d>0\) (depending only on \(d\)) such that
\begin{equation}\label{eq:norm-equiv}
\|B^{2}\|_{1}\le c_d\,\|B^{2}\|_{\mathrm{op}}.
\end{equation}
For the block-diagonal form \(\mathcal C\), the eigenvalues of \(\mathcal C^2\) are \(-C_j^2\), so
\[
\|B^{2}\|_{\mathrm{op}}
=\|\mathcal C^{2}\|_{\mathrm{op}}
=\max_{1\le j\le m}C_j^{2}.
\]
Combining this with \eqref{eq:norm-equiv} yields
\begin{equation}\label{eq:B2-1norm}
\sqrt{\|B^{2}\|_{1}}
\;\le\; \sqrt{c_d}\,\max_{1\le j\le m} C_j.
\end{equation}
Since the lowest Landau level satisfies 
\(\inf\sigma(H_{B})=\sum_{j=1}^{m}C_{j}\ge\max_{j}C_{j}\),
we obtain
\begin{equation}\label{eq:B2-1norm-E}
\sqrt{\|B^{2}\|_{1}}
\;\le\; \sqrt{c_d}\,\inf\sigma(H_B)
\;\le\; \sqrt{c_d}\,E.
\end{equation}
Inserting this bound into the previous estimate shows that
\[
\lvert f \rvert_{L^2(\R^d)}^2 \le 
\left( \frac{C_1}{\rho} \right)^{
C_2 + \widetilde C_3 |\ell|_1 \sqrt{E} + \widetilde C_4  |\ell|_1^2 E \sqrt{||B||_1} }
\, \lvert f \rvert_{L^2(S)}^2,
\]
where \(\widetilde C_3,\widetilde C_4\) depend only on \(d\) and on the geometric data of the thick set.
This completes the proof.
\end{proof}
 
\begin{remark}\label{remark:optimality}
We explain why a quadratic dependence on \( |\ell|_1 \) multiplied by a magnetic scale is unavoidable in the exponent of Theorem~\ref{Theorem: spectral inequality}.
Assume first that \(B\) has full rank \(2m=d\); the case \(\operatorname{rank} B<d\) follows by adding a harmless Gaussian envelope in the null directions, which does not affect the tail behaviour below.

Let $\Psi$ denote the normalized ground state as in \eqref{eqn:normalized_ground_state}
and let \(C_{\min}:=\min_{1\le j\le m} C_j>0\).
Fix $\rho\in(0,1)$ and choose $L>2r>0$ such that the periodic hole set
\[
S^c=\bigcup_{k\in\mathbb{Z}^d} B_r(Lk), 
\qquad S=\R^d\setminus S^c,
\]
satisfies $\rho = 1-\omega_d (r/L)^d$, 
where $\omega_d = |B_1(0)| = \pi^{d/2}/\Gamma(\tfrac{d}{2}+1)$ 
denotes the volume of the unit ball in $\R^d$. 
For an axis-parallel vector $\ell=(\ell_1,\dots,\ell_d)$ with $\ell_i\asymp L$, 
the set $S$ is $(\ell,\rho)$–thick and $r\asymp L\asymp|\ell|_1$.
Placing the origin at the centre of a hole and setting \(f=\Psi_0\), we obtain
\[
\|f\|_{L^2(S)}^2
  = 1-\!\int_{S^c}|\Psi_0|^2
  \le 1-\!\int_{|z|<r}|\Psi_0|^2
  = \int_{|z|\ge r}|\Psi_0|^2\,dz.
\]
A standard Gaussian tail estimate gives
\[
\int_{|z|\ge r}|\Psi_0|^2\,dz
  \le C\,r^{d-2}\exp\!\Big(-\tfrac{C_{\min}}{4}\,r^2\Big).
\]
Since \(r\asymp|\ell|_1\), we obtain
\[
\|f\|_{L^2(S)}^2
   \le C'\,|\ell|_1^{\,d-2}\exp\!\big(-c\,C_{\min}|\ell|_1^2\big),
\]
for some constants \(C',c>0\) depending only on \(d\) and \(\rho\).
Applying the spectral inequality to \(f\) with \(\|f\|_{L^2(\R^d)}=1\) yields
\[
\Big(\tfrac{C_1}{\rho}\Big)^{\,C(\ell,E_0,B)}
   \gtrsim
   \exp\!\big(c\,C_{\min}|\ell|_1^2\big),
\]
Taking logarithms, we find
\[
C(\ell,E_0,B) \log\!\Big(\frac{C_1}{\rho}\Big) \geq \frac{c}{2}\,C_{\min}|\ell|_1^2 - \frac{d-2}{2}\log|\ell|_1 - \log C'^{1/2}.
\]
Thus, for large $|\ell|_1$, the exponent $C(\ell,E_0,B)$ must grow at least like $C_{\min}|\ell|_1^2/\log(C_1/\rho)$. In particular, a term quadratic in $|\ell|_1$ is unavoidable.
\end{remark}

\appendix
\section{Proofs of the Regularity Lemma}\label{app:proofs}

\begin{proof}[Proof of Lemma~\ref{Lemma: Ran and inf mag diff}]
Fix \(E>0\) and let \(P_{E}:=\mathbf \mathbf{1}_{(-\infty,E]}(H_{B})\).
Take \(f\in \operatorname{Ran} P_{E}\); then \(f=P_{E}f\) and \(f\in L^{2}(\R^{d})\).
Because $P_{E}$ commutes with $H_{B}$ and the spectrum of
$H_{B}$ on $\operatorname{Ran} P_{E}$ lies in $[0,E]$, we have
\[
H_{B}^{j}f \in L^{2}(\R^{d}),
\qquad
\|H_{B}^{j}f\|_{L^{2}}\le E^{j}\|f\|_{L^{2}}
\quad (j=0,1,2,\dots).
\]
We prove by downward induction on $k$ that
\[
H_{B}^{k}f \in W_{B}^{2(N-k),2}(\R^{d})
\qquad (k=0,1,\dots,N).
\]
For $k=N$ this is clear since $H_{B}^{N}f\in L^{2}$.
Assume it holds for $k+1$.  
Because $H_{B}^{k+1}f\in W_{B}^{2(N-k-1),2}$,  
Lemma~\ref{sjöst} (the Sjöstrand elliptic estimate) applied to
$u=H_{B}^{k}f$ with $m=2(N-k-1)$ gives
\[
H_{B}^{k}f \in W_{B}^{2(N-k-1)+2,2}
           = W_{B}^{2(N-k),2}.
\]
Thus the induction closes.
Taking $k=0$ we obtain $f\in W_{B}^{2N,2}$ for every $N\in\mathbb N$.
Hence
\[
f\in \bigcap_{N\ge1} W_{B}^{2N,2}(\R^{d})
    = W_{B}^{\infty,2}(\R^{d}). \qedhere
\]
\end{proof}


\end{document}